\documentclass[11pt]{amsart}
\usepackage{geometry}
\usepackage{graphicx}
\usepackage{amssymb}
\usepackage{amsmath}
\usepackage{amsthm}
\usepackage{color}
\newtheorem{definition}{Definition}
\newtheorem{lemma}{Lemma}
\newtheorem{theorem}{Theorem}
\newtheorem{assumption}{Assumption}

\newtheorem{remark}{Remark}
\newtheorem{notation}{Notation}
\newcommand{\R}{\mathbb R}
\numberwithin{equation}{section}

\title[Quasi-local energy]{Minimizing properties of critical points \\ of quasi-local energy}
\author{Po-Ning Chen, Mu-Tao Wang, and Shing-Tung Yau}
\begin{document}

\begin{abstract}

In relativity, the energy of a moving particle depends on the observer, and the rest mass is the minimal energy seen among all observers.
The Wang--Yau quasi-local mass for a surface in spacetime introduced in \cite{wy1} and \cite{wy2} is defined by minimizing quasi-local energy associated with admissible isometric embeddings of the surface into the Minkowski space. A critical point of the quasi-local energy is an isometric embedding satisfying the Euler-Lagrange equation. 
In this article, we prove results regarding both local and global minimizing properties of  critical points of the Wang--Yau quasi-local energy.
In particular, under a condition on the mean curvature vector we show a critical point minimizes the quasi-local energy locally. The same condition also implies that the critical point is globally minimizing among all axially symmetric embedding provided the image of the associated isometric embedding lies in a totally geodesic Euclidean 3-space.
\end{abstract}
\thanks{Part of this work was carried out while all three authors were visiting Department of Mathematics of National Taiwan University and Taida Institute for Mathematical Sciences in Taipei, Taiwan. M.-T. Wang is supported by NSF grant DMS-1105483 and S.-T. Yau is supported by NSF
grants DMS-0804454 and PHY-07146468.} 
\date{June  13, 2013}
\maketitle

\section{Introduction}

Let $\Sigma$ be a closed embedded spacelike 2-surface in a spacetime $N$. We assume $\Sigma$ is a topological 2-sphere and the mean curvature vector field $H$ of $\Sigma$ in $N$ is a spacelike vector field. The mean curvature vector field defines a connection one-form of the normal bundle $\alpha_{H} =\langle \nabla^N_{(\cdot)} e_3, e_4\rangle$, where $e_3=-\frac{H}{|H|}$ and $e_4$ is the future unit timelike normal vector that is orthogonal to $e_3$. Let $\sigma$ be the induced metric on $\Sigma$. 

We shall consider isometric embeddings of $\sigma$ into the Minkowski space $\R^{3,1}$. We recall that this means an embedding $X:\Sigma \rightarrow \R^{3,1}$ such that the induced metric on the image is $\sigma$.  Throughout this paper, we shall fix a constant unit timelike vector $T_0$ in $\R^{3,1}$. The time function $\tau$ on the image of the isometric embedding $X$ is defined to be $\tau = - X \cdot T_0$. The existence of such an isometric embedding is guaranteed  by a convexity condition on $\sigma$ and $\tau$ (Theorem 3.1 in \cite{wy2}). The condition is equivalent to that the metric $\sigma+ d \tau \otimes d \tau$ has positive Gaussian curvature. Let  $\widehat{\Sigma}$ be the projection of the image of $X$, $X(\Sigma)$, onto the orthogonal complement of $T_0$, a totally geodesic Euclidean 3-space in $\R^{3,1}$.  $\widehat{\Sigma}$ is a convex 2-surface in the Euclidean 3-space. Then the isometric embedding of $\Sigma$ into $\R^{3,1}$ with time function $\tau$ exists
  and is unique up to an isometry of the orthogonal complement of $T_0$.

In \cite{wy1} and \cite{wy2}, Wang and Yau define a quasi-local energy for a surface $\Sigma$ with spacelike mean curvature vector $H$ in a spacetime $N$, with respect to
an isometric embedding $X$ of $\Sigma$ into $\R^{3,1}$.  The definition relies on the physical data on $\Sigma$ which consist of the induced metric $\sigma$, the norm of the mean curvature vector $|H|>0$, and the connection one-form $\alpha_{H} $. The definition also relies on the reference data from the isometric embedding $X:\Sigma\rightarrow \R^{3,1}$ whose induced metric is the same as $\sigma$. In terms of $\tau = - X \cdot T_0$, the quasi-local energy is defined to be \footnote{This notation $E(\Sigma, \tau)$ is slightly different from \cite{wy1} and \cite{wy2} where $E(\Sigma, X, T_0)$ is considered. However, no information is lost as long as only energy and mass are considered (as opposed to energy-momentum vector).}
\begin{equation}\label{energy}\begin{split}&E(\Sigma, \tau)=\int_{\widehat{\Sigma}} \widehat{H} dv_{\widehat{\Sigma}}-\int_\Sigma \left[\sqrt{1+|\nabla\tau|^2}\cosh\theta|{H}|-\nabla\tau\cdot \nabla \theta -\alpha_H ( \nabla \tau) \right]dv_\Sigma,\end{split}\end{equation}
 where $\nabla$ and $\Delta$ are the covariant derivative and Laplace operator with respect to $\sigma$, and $\theta$ is defined by $\sinh \theta =\frac{-\Delta \tau}{|{H}|\sqrt{1+|\nabla \tau|^2}}$. Finally,  $\widehat{H}$ is the mean curvature  of  $\widehat{\Sigma}$ in $\R^3$. We note that in $E(\Sigma, \tau)$, the first argument $\Sigma$ represents a physical surface in spacetime with the data $(\sigma, |H|, \alpha_H)$, while the second argument $\tau$ indicates an isometric embedding of the induced metric into $\R^{3,1}$ with time function $\tau$ with respect to the fixed $T_0$. 
 
The Wang--Yau quasi-local mass for the surface $\Sigma$ in $N$  is defined to be the minimum of $E(\Sigma,\tau)$ among all ``admissible" time functions $\tau$ (or isometric embeddings). This admissible condition is given in Definition 5.1 of \cite{wy2}  (see also section 3.1). This condition for $\tau$ implies that $E(\Sigma, \tau)$ is non-negative if $N$ satisfies the dominant energy condition. We recall the Euler--Lagrange equation for the functional $E(\Sigma,\tau)$  of $\tau$, which is derived in \cite{wy2}. Of course, a critical point $\tau$ of $E(\Sigma, \tau)$ satisfies this equation. 
\begin{definition} Given the physical data $(\sigma, |H|, \alpha_H)$ on a 2-surface $\Sigma$. We say that a smooth function $\tau$ is a solution to the optimal 
embedding equation for $(\sigma, |H|, \alpha_H)$ if the metric $\widehat{\sigma}=\sigma+d\tau\otimes d\tau$ can be isometrically embedded into $\R^3$ with image $\widehat\Sigma$ such that
\begin{equation}\label{Euler}
-(\widehat{H}\hat{\sigma}^{ab} -\hat{\sigma}^{ac} \hat{\sigma}^{bd} \hat{h}_{cd})\frac{\nabla_b\nabla_a \tau}{\sqrt{1+|\nabla\tau|^2}}+ div_\sigma (\frac{\nabla\tau}{\sqrt{1+|\nabla\tau|^2}} \cosh\theta|{H}|-\nabla\theta-\alpha_H)=0\end{equation}
where $\nabla$ and $\Delta$ are the covariant derivative and Laplace operator with respect to $\sigma$, $\theta$ is defined by $\sinh \theta =\frac{-\Delta \tau}{|{H}|\sqrt{1+|\nabla \tau|^2}}$.  $\widehat{h}_{ab}$ and $\widehat{H}$ are the second fundamental form and the mean curvature of $\widehat{\Sigma}$, respectively.
\end{definition}
It is natural to ask the following questions:
\begin{enumerate}
\item How do we find solutions to the optimal embedding equation?
\item Does a solution of the optimal embedding equation minimize $E(\Sigma, \tau)$, either locally or globally?
\end{enumerate}

Before addressing these questions, let us fix the notation on the space of isometric embeddings of $(\Sigma, \sigma)$ into $\R^{3,1}$.
 \begin{notation}
$T_0$ is  a fixed constant future timelike unit vector throughout the paper, $X_\tau$ will denote an isometric embedding of $\sigma$ into $\R^{3,1}$ with time function $\tau=-X\cdot T_0$.  We denote the image of $X_\tau$ by $\Sigma_\tau$, the mean curvature vector of $\Sigma_\tau$ by $H_\tau$ and the connection one-form of the normal bundle of $\Sigma_{\tau}$ determined by $H_\tau$ by $\alpha_{H_{\tau}}$. We denote the projection of $\Sigma_\tau$ onto the orthogonal complement of $T_0$ by $\widehat{\Sigma}_\tau$  and the mean curvature of  $\widehat{\Sigma}_\tau$ by  $\widehat H_{\tau}$.
\end{notation}

In particular, when $\sigma$ has positive Gauss curvature, $X_0$ will denote an isometric embedding of $\sigma$ into the orthogonal complement of $T_0$. $\Sigma_0$ denotes the image of $X_0$, The mean curvature $H_0$ of $\Sigma_0$ can be viewed as a positive function by the assumption.

 In \cite{cwy}, the authors  studied the above questions at spatial or null infinity of asymptotically flat manifolds and proved that a series solution exists for the optimal embedding equation, the solution minimizes the quasi-local energy locally and the mass it achieves agrees with the ADM or Bondi mass at infinity for asymptotically flat manifolds. On the other hand, when $\alpha_H$ is divergence free, $\tau=0$ is a solution to the optimal embedding equation.   Miao--Tam--Xie \cite{mtx} and Miao--Tam \cite{mt} studied the time-symmetric case and found several conditions such that $\tau=0$ is a local minimum. 
In particular, this holds if
$ H_0 > |H|>0 $
where $H_0$ is the mean curvature of the isometric embedding $X_0$ of $\Sigma$ into $\R^3$ (i.e. with time function $\tau=0$).  

For theorems proved in this paper, we impose the following assumption on the physical surface $\Sigma$.

\begin{assumption}\label{assumption}
Let $\Sigma$ be a closed embedded spacelike 2-surface in a spacetime $N$ satisfying the dominant energy condition. We assume that $\Sigma$ is a topological 2-sphere and the mean curvature vector field $H$ of $\Sigma$ in $N$ is a spacelike vector field.
\end{assumption}

We first prove the following comparison theorem among quasi-local energies. 
\begin{theorem} \label{thm_int}
Suppose  $\Sigma$ satisfies Assumption 1 and $\tau_0$ is a is a critical point of the quasi-local energy functional $E(\Sigma,\tau)$.  
Assume further that 
\[ |H_{\tau_0}| > |H| \]
where $H_{\tau_0}$ is the mean curvature vector of the isometric embedding of $\Sigma$ into $\R^{3,1}$ with time function $\tau_0$.
Then,  for any time function $\tau$ such that $ \sigma + d \tau \otimes d \tau$ has positive Gaussian curvature, we have
\[E(\Sigma,\tau) \ge E(\Sigma,\tau_0) +E(\Sigma_{\tau_0}, \tau).\]
Moreover, equality holds if and only if $\tau-\tau_0$ is a constant .  
\end{theorem}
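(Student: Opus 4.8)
The plan is to exploit the fact that the reference term $\int_{\widehat\Sigma_\tau}\widehat H_\tau\, dv_{\widehat\Sigma_\tau}$ in \eqref{energy} depends only on $(\sigma,\tau)$, so it is common to $E(\Sigma,\tau)$ and to $E(\Sigma_{\tau_0},\tau)$ (both surfaces having induced metric $\sigma$). Writing $E(\Sigma,\tau)=\int_{\widehat\Sigma_\tau}\widehat H_\tau\, dv_{\widehat\Sigma_\tau}-F(|H|,\alpha_H;\tau)$ with
\[
F(k,\alpha;\tau)=\int_\Sigma\Big[\sqrt{1+|\nabla\tau|^2}\cosh\theta_k\, k-\nabla\tau\cdot\nabla\theta_k-\alpha(\nabla\tau)\Big]\, dv_\Sigma,\qquad \sinh\theta_k=\tfrac{-\Delta\tau}{k\sqrt{1+|\nabla\tau|^2}},
\]
the reference terms cancel in the difference, so that
\[
E(\Sigma,\tau)-E(\Sigma_{\tau_0},\tau)=F(|H_{\tau_0}|,\alpha_{H_{\tau_0}};\tau)-F(|H|,\alpha_H;\tau)=:G(\tau).
\]
Since $\Sigma_{\tau_0}$ is by construction a surface in $\R^{3,1}$ whose own defining embedding has time function $\tau_0$, we have $E(\Sigma_{\tau_0},\tau_0)=0$, hence $G(\tau_0)=E(\Sigma,\tau_0)$. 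Thus the asserted inequality is exactly the statement that $\tau_0$ globally minimizes $G$, and the equality clause is the statement that this minimizer is unique modulo additive constants. Note that $G$ is well-defined for all smooth $\tau$ (the positive Gaussian curvature constraint was only needed to define the now-cancelled reference term), so we may work on the whole linear space.

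First I would check that $\tau_0$ is a critical point of $G$. By hypothesis $\tau_0$ is a critical point of $E(\Sigma,\cdot)$. Moreover, because $\Sigma_{\tau_0}$ lies in flat $\R^{3,1}$ (which trivially satisfies the dominant energy condition) and $\tau_0$ is the time function of its own isometric embedding, we have $E(\Sigma_{\tau_0},\cdot)\ge 0$ with $E(\Sigma_{\tau_0},\tau_0)=0$; equivalently $\tau_0$ solves the optimal embedding equation for $(\sigma,|H_{\tau_0}|,\alpha_{H_{\tau_0}})$, so $\tau_0$ is a critical point of $E(\Sigma_{\tau_0},\cdot)$ as well. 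Since the two energies share the same reference term, subtracting their Euler--Lagrange expressions at $\tau_0$ yields $\delta G|_{\tau_0}=0$.

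The heart of the argument is the convexity of $G$. The connection terms contribute $\int_\Sigma(\alpha_H-\alpha_{H_{\tau_0}})(\nabla\tau)\, dv_\Sigma$, which is linear in $\tau$ and hence irrelevant to convexity. For the remaining part I would differentiate $F$ in the parameter $k$: using $\partial_k\theta_k=-\tanh\theta_k/k$ and integrating $\nabla\tau\cdot\nabla(\partial_k\theta_k)$ by parts together with $\Delta\tau=-k\sqrt{1+|\nabla\tau|^2}\sinh\theta_k$, all cross terms collapse and one obtains the clean identity
\[
\frac{\partial}{\partial k}\int_\Sigma\Big[\sqrt{1+|\nabla\tau|^2}\cosh\theta_k\, k-\nabla\tau\cdot\nabla\theta_k\Big]dv_\Sigma=\int_\Sigma\sqrt{1+|\nabla\tau|^2}\cosh\theta_k\, dv_\Sigma=\int_\Sigma\sqrt{1+|\nabla\tau|^2+\tfrac{(\Delta\tau)^2}{k^2}}\, dv_\Sigma.
\]
Integrating over $k$ from $|H|$ to $|H_{\tau_0}|$ along $k_t=(1-t)|H|+t|H_{\tau_0}|$ gives
\[
G(\tau)=\int_0^1\!\!\int_\Sigma\sqrt{1+|\nabla\tau|^2+\tfrac{(\Delta\tau)^2}{k_t^2}}\,\big(|H_{\tau_0}|-|H|\big)\, dv_\Sigma\, dt+\int_\Sigma(\alpha_H-\alpha_{H_{\tau_0}})(\nabla\tau)\, dv_\Sigma.
\]
The integrand $\sqrt{1+|\nabla\tau|^2+(\Delta\tau)^2/k_t^2}=\sqrt{1+|(\nabla\tau,\Delta\tau/k_t)|^2}$ is a convex function of the quantities $(\nabla\tau,\Delta\tau/k_t)$, which depend linearly on $\tau$, and the weight $|H_{\tau_0}|-|H|$ is positive by hypothesis. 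Hence $G$ is convex, so its critical point $\tau_0$ is a global minimum, which is the inequality $E(\Sigma,\tau)\ge E(\Sigma,\tau_0)+E(\Sigma_{\tau_0},\tau)$.

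For the equality case I would use strict convexity: the Hessian of $v\mapsto\sqrt{1+|v|^2}$ is positive definite, so along the segment $\tau_0+s(\tau-\tau_0)$ the convex part of $G$ has vanishing second derivative only if $\nabla(\tau-\tau_0)\equiv 0$, i.e. $\tau-\tau_0$ is constant; conversely adding a constant to $\tau$ alters neither $\nabla\tau,\Delta\tau$ nor the linear term, so $G$ is invariant under it. Thus equality holds precisely when $\tau-\tau_0$ is constant. The main obstacle I anticipate is the differentiation-in-$k$ identity, where one must verify that after the integration by parts every term except $\int_\Sigma\sqrt{1+|\nabla\tau|^2}\cosh\theta_k\, dv_\Sigma$ cancels; this is exactly what converts the a priori complicated $\tau$-dependence of $\theta_k$ into a manifestly convex integrand. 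A secondary point requiring care is the justification that $\tau_0$ is critical for $E(\Sigma_{\tau_0},\cdot)$, namely that a surface sitting in Minkowski space is a critical point of its own quasi-local energy with vanishing value.
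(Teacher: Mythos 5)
Your proof is correct, and it departs from the paper's argument at the decisive step, so it is worth comparing the two. Both proofs start identically: since $\Sigma$ and $\Sigma_{\tau_0}$ carry the same induced metric $\sigma$, the reference terms cancel and the claim reduces to showing that $G(\tau)=E(\Sigma,\tau)-E(\Sigma_{\tau_0},\tau)$ is globally minimized at $\tau_0$, where $G(\tau_0)=E(\Sigma,\tau_0)$ because $E(\Sigma_{\tau_0},\tau_0)=0$; and both consume the same two inputs, namely that $\tau_0$ is critical for $E(\Sigma,\cdot)$ (hypothesis) and for $E(\Sigma_{\tau_0},\cdot)$ (positivity of energy for the Minkowski surface $\Sigma_{\tau_0}$). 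From there the paper substitutes the explicit Euler--Lagrange formulas \eqref{div_1} and \eqref{div_2} for $div_{\sigma}\alpha_H$ and $div_{\sigma}\alpha_{H_{\tau_0}}$ into the difference, minimizes the resulting pointwise function $f(x)$ of $x=\Delta\tau/\sqrt{1+|\nabla\tau|^2}$ at $x=x_0$, and finishes with a Cauchy--Schwarz estimate on the cross term involving $\nabla\tau_0\cdot\nabla\tau$. You instead prove that $G$ is convex: your $k$-differentiation identity is correct (pointwise, $\partial_k\big[\sqrt{k^2(1+|\nabla\tau|^2)+(\Delta\tau)^2}-\Delta\tau\sinh^{-1}\frac{\Delta\tau}{k\sqrt{1+|\nabla\tau|^2}}\big]=\sqrt{1+|\nabla\tau|^2+(\Delta\tau)^2/k^2}$), and it exhibits $G$ as a positively weighted (weight $|H_{\tau_0}|-|H|>0$) integral of convex functions of the quantities $(\nabla\tau,\Delta\tau/k_t)$, which are linear in $\tau$, plus a term linear in $\tau$; convexity together with $\delta G|_{\tau_0}=0$ then yields global minimality, and strict convexity of $v\mapsto\sqrt{1+|v|^2}$ yields the equality case $\nabla(\tau-\tau_0)\equiv 0$. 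What your route buys: it explains the hypothesis $|H_{\tau_0}|>|H|$ conceptually as exactly a convexity condition on $G$, and it uses the two optimal embedding equations only through the abstract vanishing of first variations, never through their explicit forms. What the paper's route buys: it stays entirely within pointwise computations on the constrained set of admissible $\tau$, producing the explicit lower bound identified with $E(\Sigma,\tau_0)$, with no need to extend $G$ to the full linear space of smooth functions. One caveat you share with the paper and should make explicit: the assertion $E(\Sigma_{\tau_0},\tau)\ge 0$ for $\tau$ near $\tau_0$ (hence criticality of $\tau_0$ for $E(\Sigma_{\tau_0},\cdot)$) presupposes admissibility of those $\tau$ with respect to $\Sigma_{\tau_0}$, which is precisely what Section 3 of the paper establishes; like the paper, you should invoke that result (or the direct verification of \eqref{div_2} the paper attributes to \cite{w}) rather than treat positivity as automatic.
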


As a corollary of Theorem \ref{thm_int}, we prove the following theorem about local minimizing property of an arbitrary, non-time-symmetric, solution to the optimal embedding equation.
\begin{theorem} \label{thm_local}
Suppose  $\Sigma$ satisfies Assumption 1 and $\tau_0$ is a critical point of the quasi-local energy functional $E(\Sigma, \tau)$.
Assume further that 
\[ |H_{\tau_0}| > |H| >0\]
where $H_{\tau_0}$ is the mean curvature vector of the isometric embedding $X_{\tau_0}$ of $\Sigma$ into $\R^{3,1}$ with time function $\tau_0$.
Then, 
 $\tau_0$ is a local minimum for  $E(\Sigma,\tau)$.
\end{theorem}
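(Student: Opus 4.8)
The plan is to deduce this local minimizing property directly from the comparison inequality of Theorem \ref{thm_int}. Since $\tau_0$ solves the optimal embedding equation, the metric $\sigma + d\tau_0\otimes d\tau_0$ has positive Gaussian curvature; positive Gaussian curvature is an open condition, so every $\tau$ in a $C^2$-neighborhood of $\tau_0$ also has $\sigma + d\tau\otimes d\tau$ of positive Gaussian curvature. For such $\tau$, Theorem \ref{thm_int} applies and gives
\[
E(\Sigma,\tau) \ge E(\Sigma,\tau_0) + E(\Sigma_{\tau_0},\tau).
\]
Hence it suffices to show $E(\Sigma_{\tau_0},\tau)\ge 0$ for all $\tau$ near $\tau_0$: this yields $E(\Sigma,\tau)\ge E(\Sigma,\tau_0)$, which is precisely the assertion that $\tau_0$ is a local minimum.

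The key observation is that $\Sigma_{\tau_0}$ is a \emph{reference} surface: it is the image of the isometric embedding $X_{\tau_0}$ and therefore lies in the flat Minkowski space $\R^{3,1}$, which trivially satisfies the dominant energy condition. Its physical data are $(\sigma,|H_{\tau_0}|,\alpha_{H_{\tau_0}})$, and the hypothesis $|H_{\tau_0}|>|H|>0$ forces $|H_{\tau_0}|>0$, so the mean curvature vector of $\Sigma_{\tau_0}$ is spacelike and $\Sigma_{\tau_0}$ satisfies Assumption \ref{assumption}. Evaluating $E(\Sigma_{\tau_0},\cdot)$ at the time function $\tau_0$ amounts to taking the reference embedding to be $X_{\tau_0}$ itself; since the physical and reference data then coincide, the physical term in \eqref{energy} reduces to the reference term $\int_{\widehat{\Sigma}_{\tau_0}}\widehat{H}_{\tau_0}\,dv_{\widehat{\Sigma}_{\tau_0}}$, and consequently $E(\Sigma_{\tau_0},\tau_0)=0$.

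Finally I would invoke the positivity of the Wang--Yau quasi-local energy proved in \cite{wy2}: for a surface in a spacetime obeying the dominant energy condition, the energy associated with any admissible time function is non-negative. Applied to $\Sigma_{\tau_0}\subset\R^{3,1}$, this gives $E(\Sigma_{\tau_0},\tau)\ge 0$ for every admissible $\tau$. Because $\tau_0$ is admissible for $\Sigma_{\tau_0}$ (it simply reproduces the embedding $X_{\tau_0}$) and admissibility is an open condition, every $\tau$ sufficiently close to $\tau_0$ is admissible, so $E(\Sigma_{\tau_0},\tau)\ge 0$ in a neighborhood of $\tau_0$. Combining this with the displayed comparison inequality completes the argument; moreover, since $E(\Sigma,\tau)$ depends only on $d\tau$ and $\Delta\tau$ and is thus invariant under adding a constant to $\tau$, the equality clause of Theorem \ref{thm_int} shows the minimum is strict modulo constants.

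The main obstacle I anticipate is the careful bookkeeping of admissibility: one must confirm that $\tau_0$ is an admissible time function for the Minkowski surface $\Sigma_{\tau_0}$ and that admissibility genuinely persists for nearby $\tau$, so that the positivity theorem of \cite{wy2} is legitimately applicable on an open neighborhood rather than merely at $\tau_0$. Verifying the identity $E(\Sigma_{\tau_0},\tau_0)=0$ directly from \eqref{energy} is the other point to pin down, although it is a standard structural feature of the construction.
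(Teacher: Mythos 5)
Your reduction is exactly the one the paper uses: apply Theorem \ref{thm_int} to get $E(\Sigma,\tau)\ge E(\Sigma,\tau_0)+E(\Sigma_{\tau_0},\tau)$ for all $\tau$ in a neighborhood of $\tau_0$ (positive Gauss curvature of $\sigma+d\tau\otimes d\tau$ being an open condition), and then show $E(\Sigma_{\tau_0},\tau)\ge 0$ by proving $\tau$ is admissible for the Minkowski surface $\Sigma_{\tau_0}$ and invoking the positivity theorem of \cite{wy2}. However, the step you set aside as ``careful bookkeeping'' is in fact the entire technical content of the paper's proof, and neither of the two assertions you use to discharge it is true for trivial reasons. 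First, ``$\tau_0$ is admissible because it simply reproduces the embedding $X_{\tau_0}$'' is not an argument: admissibility (Definition 5.1 of \cite{wy2}, recalled in Section 3.1) requires, besides the Gauss curvature condition, (ii) solvability of the Dirichlet problem for Jang's equation with boundary data $\tau$ on a hypersurface $\Omega$ spanning $\Sigma_{\tau_0}$, and (iii) positivity of the generalized mean curvature $h(\Sigma,X_{\tau_0},\tau,e_3')$ in the gauge $e_3'$ determined by that Jang solution. Even at $\tau=\tau_0$ these are nontrivial: condition (iii) is verified in the paper only via the gauge identification $e_3'=\breve e_3(\Sigma_{\tau_0})$ (Lemma \ref{lemma_gauge}, whose proof occupies the appendix and uses that the height function of $\Omega$ over its projection solves Jang's equation), after which $h=\widehat H/\sqrt{1+|\nabla\tau_0|^2}>0$ follows from Proposition 3.1 of \cite{wy2}.

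Second, ``admissibility is an open condition'' cannot be asserted wholesale. Condition (iii) is indeed open (a strict pointwise inequality, once checked at $\tau_0$), but condition (ii) concerns solvability of a quasilinear equation of minimal surface type whose solutions can blow up, and openness in the boundary data is precisely what would have to be proved. The paper does not argue by perturbation here at all: it proves an existence theorem (Theorems \ref{thm_Jang} and \ref{thm_solvable_Jang}) showing the Dirichlet problem is solvable for \emph{any} boundary data provided $H_\Sigma>|tr_\Sigma p|$ --- which holds because $H_{\tau_0}$ is spacelike (this is where the hypothesis $|H_{\tau_0}|>0$ enters) and the projection $\widehat\Sigma$ is convex --- and provided there are no marginally trapped surfaces in the ambient space, which holds in $\R^{3,1}$ by \cite{k}. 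The sub/super solution construction behind Theorem \ref{thm_solvable_Jang} is new analysis, not a citation. So your outline has the correct skeleton, identical to the paper's, but it contains a genuine gap exactly where the paper does its work: without the Jang solvability theorem and the gauge lemma, the positivity $E(\Sigma_{\tau_0},\tau)\ge 0$ for nearby $\tau$ is unsupported.
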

The special case when $\tau_0=0$ was proved by Miao--Tam--Xie  \cite{mtx}. They estimate the second variation of quasi-local energy around 
the critical point $\tau=0$  by linearizing the optimal embedding equation near the critical point and then applying a generalization of Reilly's formula. As we allow
$\tau_0$ to be an arbitrary solution of the optimal isometric embedding equation, a different method is devised to deal with the fully nonlinear nature of the equation.

In general, the space of admissible isometric embeddings as solutions of a fully nonlinear elliptic system is very complicated and global knowledge of the quasi-local energy is difficult to obtained. However, we are able to prove a global minimizing result in the axially symmetric case. 

\begin{theorem} \label{thm_global_axial}
Let  $\Sigma$ satisfy Assumption 1. Suppose that the induced metric $\sigma$ of  $\Sigma$  is axially symmetric with positive Gauss curvature, $\tau=0$ is a solution to the optimal embedding equation for $\Sigma$ in $N$, and
\[ H_0 > |H|>0. \]
Then for any axially symmetric time function $\tau$ such that  $\sigma + d\tau \otimes  d\tau$ has positive Gauss curvature, 
\[ E(\Sigma , \tau) \ge E(\Sigma, 0). \]
Moreover, equality holds if and only if $\tau$ is a constant . 
\end{theorem}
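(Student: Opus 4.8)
The plan is to obtain the statement as a consequence of Theorem \ref{thm_int} together with a positivity estimate for the Euclidean reference surface. Since $\tau=0$ solves the optimal embedding equation it is a critical point of $E(\Sigma,\cdot)$, and the corresponding embedding is the time-symmetric $X_0$ whose mean curvature vector satisfies $|H_{\tau_0}|=H_0>|H|$ by hypothesis; hence $\tau_0=0$ meets the hypotheses of Theorem \ref{thm_int}. Applying that theorem with $\tau_0=0$ gives, for every $\tau$ with $\sigma+d\tau\otimes d\tau$ of positive Gauss curvature,
\[ E(\Sigma,\tau)\ge E(\Sigma,0)+E(\Sigma_0,\tau), \]
with equality if and only if $\tau$ is constant. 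Thus the theorem reduces to the single inequality $E(\Sigma_0,\tau)\ge 0$ for all axially symmetric $\tau$ with $\sigma+d\tau\otimes d\tau$ of positive Gauss curvature. The equality clause is then immediate: if $E(\Sigma,\tau)=E(\Sigma,0)$ then equality must hold in the displayed inequality, forcing $\tau$ to be constant, while conversely a constant $\tau$ gives the same embedding up to translation.

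It remains to study $E(\Sigma_0,\tau)$. Here the physical surface is the convex surface $\Sigma_0$ lying in the totally geodesic Euclidean slice $T_0^{\perp}=\R^3$, so its data simplify considerably: $|H|$ is the ordinary Euclidean mean curvature $H_0$, and since $e_4=T_0$ is parallel in $\R^{3,1}$ the connection one-form vanishes, $\alpha_{H_0}=0$. Integrating the term $\nabla\tau\cdot\nabla\theta$ by parts on the closed surface $\Sigma_0$ and using $\Delta\tau=-H_0\sqrt{1+|\nabla\tau|^2}\sinh\theta$, the physical integrand combines into a single expression, leaving
\[ E(\Sigma_0,\tau)=\int_{\widehat\Sigma_\tau}\widehat H_\tau\,dv_{\widehat\Sigma_\tau}-\int_{\Sigma_0}H_0\sqrt{1+|\nabla\tau|^2}\,(\cosh\theta-\theta\sinh\theta)\,dv_{\Sigma_0}. \]
I emphasize that one cannot simply quote positivity of the Wang--Yau mass here, because only convexity of $\widehat\sigma=\sigma+d\tau\otimes d\tau$ is assumed and not the full admissibility of $\tau$; supplying an unconditional argument is exactly the role of the axial symmetry.

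To prove $E(\Sigma_0,\tau)\ge 0$ I would exploit axial symmetry to reduce the functional to one dimension. Writing $\sigma=d\rho^2+r(\rho)^2 d\phi^2$ on $S^2$ with $r$ concave (positive Gauss curvature) and $\tau=\tau(\rho)$, the crucial observation is that the coefficient of $d\phi^2$ is unchanged by adding $d\tau\otimes d\tau$; hence $\Sigma_0$ and the embedded projection $\widehat\Sigma_\tau$ are both surfaces of revolution with the \emph{same} latitude-radius function $r(\rho)$, their meridians differing only by the speed factor $\sqrt{1+\tau'^2}$. This yields an explicit algebraic relation between the two meridian tangent angles, and every integrand above depends only on $\rho$, so that $E(\Sigma_0,\tau)=2\pi\int_0^\pi F(\rho)\,d\rho$ for an explicit $F$ built from $r,\tau$ and their derivatives.

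The principal obstacle is then the one-dimensional inequality $\int_0^\pi F\,d\rho\ge 0$. The delicate ingredient is the total mean curvature term $\int_{\widehat\Sigma_\tau}\widehat H_\tau$, since $\widehat H_\tau$ depends on the profile of $\widehat\Sigma_\tau$ only implicitly through the isometric embedding of $\widehat\sigma$; I would control it by expressing the meridian curvature in terms of the tangent angle and integrating the resulting total $\rho$-derivative, whose boundary contributions vanish at the poles where $r=0$. The remaining bulk term should then be expressible as a manifestly nonnegative quantity --- I expect a perfect square in $\tau'$ together with the factors $\cosh\theta-\theta\sinh\theta\le 1$ and the concavity of $r$ --- whose vanishing forces $\tau'\equiv 0$. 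This both establishes $E(\Sigma_0,\tau)\ge 0$ and reconfirms that equality occurs precisely for constant $\tau$.
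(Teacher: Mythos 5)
Your opening reduction is exactly the paper's: since $\tau_0=0$ is a critical point with $|H_{\tau_0}|=H_0>|H|$, Theorem \ref{thm_int} gives $E(\Sigma,\tau)\ge E(\Sigma,0)+E(\Sigma_0,\tau)$ with equality iff $\tau$ is constant, so everything (including the equality clause) reduces to proving $E(\Sigma_0,\tau)\ge 0$; you are also right that one cannot invoke positivity of mass here because admissibility of $\tau$ for $\Sigma_0$ is not known. Your simplified formula for $E(\Sigma_0,\tau)$ and the observation that $\Sigma_0$ and $\widehat\Sigma_\tau$ are surfaces of revolution with the same latitude radius (equivalently, $\tilde v_\theta=\sqrt{v_\theta^2+\tau_\theta^2}$, which is the setup of the paper's Lemma \ref{lemma_mean_identity}) are both correct. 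But from there on you have a plan, not a proof: the central inequality is deferred to the hope that the integrand becomes ``a perfect square in $\tau'$ together with the factors $\cosh\theta-\theta\sinh\theta\le 1$.'' That hope cannot be realized in the form stated. Using $\cosh\theta-\theta\sinh\theta\le 1$ to discard the transcendental term reduces the claim to
\[\int_{\widehat\Sigma_\tau}\widehat H_\tau\,dv_{\widehat\Sigma_\tau}\ \ge\ \int_{\Sigma_0}H_0\sqrt{1+|\nabla\tau|^2}\,dv_{\Sigma_0},\]
and this is \emph{false}: take $\Sigma_0$ the unit round sphere and $\tau=\epsilon z$ the restriction of a linear coordinate (a boosted sphere). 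Then $\widehat\Sigma_\tau$ is the spheroid with semi-axes $1,1,\sqrt{1+\epsilon^2}$, and one computes the left side to be $8\pi+\tfrac{4\pi}{3}\epsilon^2+O(\epsilon^4)$ while the right side is $8\pi+\tfrac{8\pi}{3}\epsilon^2+O(\epsilon^4)$. In this example $E(\Sigma_0,\tau)$ vanishes (the reference surface is congruent to $\Sigma_0$), so there is no slack at all: no argument that throws away the $\theta\sinh\theta$ term can succeed, and the surviving terms are transcendental in $\Delta\tau$, not a pointwise perfect square.

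The paper closes this gap with machinery absent from your proposal. It runs a homotopy $F(s)=E(\Sigma_0,s\tau)$ (after checking that $\sigma+d(s\tau)\otimes d(s\tau)$ retains positive Gauss curvature for all $0\le s\le 1$), and differentiates the reference term $G(s)=\int_{\widehat\Sigma_{s\tau}}\widehat H_{s\tau}$ not by brute force but via the auxiliary gauge-fixed functional $\tilde E(\Sigma,e_3,f)$ of \eqref{new_energy}: Lemma \ref{lemma_cri} shows $f=\tau$ is a critical point of $\tilde E(\Sigma_\tau,\breve e_3(\Sigma_\tau),f)$, which makes all terms involving the unknown variation of the embedding drop out of $G'(s)$. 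This produces the differential inequality
\[F'(s)\ \ge\ \frac{F(s)}{s}\]
provided $H_0^2\ge\langle H_{s\tau},H_{s\tau}\rangle$ pointwise, and \emph{that} is where axial symmetry enters, through the identity $H_0^2-\langle H_{s\tau},H_{s\tau}\rangle=\bigl(v_\theta\,\Delta(s\tau)-s\tau_\theta\,\Delta v\bigr)^2/\bigl(v_\theta^2+s^2\tau_\theta^2\bigr)$ of Lemma \ref{lemma_mean_identity}. So your instinct that a perfect square in $\tau$ underlies the result is correct, but it appears in the pointwise mean-curvature comparison feeding an ODE argument ($F(0)=F'(0)=0$ plus $F'\ge F/s$ gives $F(1)\ge 0$), not as a rewriting of the energy integrand. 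To complete your proof you would need to supply this mechanism or a genuine substitute; as written, the key positivity is unproven and the one concrete tool you name leads to a false inequality.
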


This theorem will have applications in studying quasi-local energy in the Kerr spacetime, or more generally an axially symmetric spacetime. It is very likely that the global minimum
of quasilocal energy of an axially symmetric datum is achieved at an axially symmetric isometric embedding into the Minkowski, though we cannot prove it at this moment.  In 
\cite{mtx}, Miao, Tam and Xie described several situations where the condition $H_0 > |H|$ holds. In particular, this includes large spheres in Kerr spacetime.



In section 2, we prove Theorem \ref{thm_int} using the nonlinear structure of the quasi-local energy.
In section 3, we  prove the admissibility of the time function $\tau$ for the surface $\Sigma_{\tau_0}$  in $\R^{3,1}$ when  $\tau$ is close to $\tau_0$. The positivity of quasi-local mass follows from the admissibility of the time function. Combining with Theorem \ref{thm_int},  this proves Theorem \ref{thm_local}.  In section 4, we prove Theorem  \ref{thm_global_axial}. Instead of using admissibility, we prove the necessary positivity of quasi-local energy using variation of quasi-local energy and a point-wise mean curvature inequality.

\section{A comparison theorem for quasi-local energy}
In this section, we prove Theorem  \ref{thm_int}. 


\begin{proof}
We start with a metric $\sigma$ and consider an isometric embedding into $\R^{3,1}$ with time function $\tau_0$. The image is an embedded space-like 2-surface $\Sigma_{\tau_0}$ in $\R^{3,1}$.
The corresponding data on $\Sigma_{\tau_0}$ are denoted as $|H_{\tau_0}|$ and $ \alpha_{H_{\tau_0}}$.
We consider the quasi-local energy  of $\Sigma_{\tau_0}$ as a physical surface in the spacetime $\R^{3,1}$ with respect to another isometric embedding $X_\tau$ into $\R^{3,1}$ with time function $\tau$. 
We recall that
\[\begin{split}
 E(\Sigma_{\tau_0},\tau) =\int_{\widehat{\Sigma}_{\tau}} \widehat{H} dv_{\widehat{\Sigma}_{\tau}}-\int_\Sigma \left[\sqrt{1+|\nabla\tau|^2}\cosh\theta_{(\tau,\tau_0)}|{H_{\tau_0}}|-\nabla\tau\cdot \nabla \theta_{(\tau,\tau_0)} - \alpha_{H_{\tau_0}}( \nabla \tau) \right]dv_\Sigma \end{split}\] where $\theta_{(\tau,\tau_0)}$ is defined by $\sinh \theta_{(\tau,\tau_0)} =\frac{-\Delta \tau}{|{H_{\tau_0}}|\sqrt{1+|\nabla \tau|^2}}$.

Using $ E(\Sigma_{\tau_0},\tau)$, $E(\Sigma,\tau)$ can be expressed as 
\begin{equation}\label{firstlowerbound}
\begin{split}
E(\Sigma,\tau) & =   \int_{\widehat{\Sigma}} \widehat{H} dv_{\widehat{\Sigma}}  -\int_\Sigma \left[\sqrt{1+|\nabla\tau|^2}\cosh\theta|{H}|-\nabla\tau\cdot \nabla \theta -\alpha_H ( \nabla \tau) \right]dv_\Sigma\\
& =  \,E(\Sigma_{\tau_0},\tau) + A
\end{split} 
\end{equation} where
\begin{equation}
\begin{split}
A&= \int_\Sigma \left[\sqrt{1+|\nabla\tau|^2}\cosh\theta_{(\tau,\tau_0)}|{H_{\tau_0}}|-\nabla\tau\cdot \nabla \theta_{(\tau,\tau_0)} - \alpha_{H_{\tau_0}}( \nabla \tau) \right]dv_\Sigma \\
& -\int_\Sigma \left[\sqrt{1+|\nabla\tau|^2}\cosh\theta|{H}|-\nabla\tau\cdot \nabla \theta - \alpha_H ( \nabla \tau ) \right]dv_\Sigma.
\end{split}
\end{equation}
In the following, we shall show that $A\geq E(\Sigma,\tau_0)$.

One can rewrite $ div_{\sigma}  \alpha_H $ and $div_{\sigma}  \alpha_{H_{\tau_0}} $ using the optimal embedding equation.
First, $\tau_0$ is a solution to the original optimal embedding equation. We have
\begin{equation}  \label{div_1}
\begin{split}
 div_{\sigma}  \alpha_H =& -(\widehat{H}\hat{\sigma}^{ab} -\hat{\sigma}^{ac} \hat{\sigma}^{bd} \hat{h}_{cd})\frac{\nabla_b\nabla_a \tau_0}{\sqrt{1+|\nabla\tau_0|^2}}+ div_\sigma \left[ \frac{\nabla\tau_0}{\sqrt{1+|\nabla\tau_0|^2}}  \sqrt{ H^2+ \frac{(\Delta \tau_0)^2}{1+|\nabla \tau_0|^2} }\right] \\
&+ \Delta  \left[  \sinh^{-1} \frac{\Delta \tau_0}{|{H}|\sqrt{1+|\nabla \tau_0|^2}} \right]
\end{split}
\end{equation} 
where $\hat{h}_{ab}$ and $\widehat{H}$ are the second fundamental form and mean curvature of $\widehat \Sigma_{\tau_0}$, respectively.

On the other hand, $\tau=\tau_0$ locally minimizes $E( \Sigma_{\tau_0},\tau)$ by the positivity of quasi-local energy. Hence,
\begin{equation}  \label{div_2}
\begin{split}
div_{\sigma}  \alpha_{H_{\tau_0}} =& -( \widehat{H}\hat{\sigma}^{ab} -\hat{\sigma}^{ac} \hat{\sigma}^{bd} \hat{h}_{cd})\frac{\nabla_b\nabla_a \tau_0}{\sqrt{1+|\nabla\tau_0|^2}}+ div_\sigma \left[ \frac{\nabla\tau_0}{\sqrt{1+|\nabla\tau_0|^2}}  \sqrt{ H_{\tau_0}^2+ \frac{(\Delta \tau_0)^2}{1+|\nabla \tau_0|^2} }\right] \\
&+ \Delta  \left[ \sinh^{-1} \frac{\Delta \tau_0}{|{H_{\tau_0}}|\sqrt{1+|\nabla \tau_0|^2}}\right]
\end{split}
\end{equation} 
where $\hat{h}_{ab}$ and $\widehat{H}$ are  the same as in equation \eqref{div_1}  (this can be verified directly as in \cite{w}).

Using equations (\ref{div_1}) and (\ref{div_2}), we have
\[\begin{split}
A = \int_{\Sigma}&  \sqrt{(1+|\nabla\tau|^2)|{H_{\tau_0}}|^2 + (\Delta \tau)^2 } 
 -  \sqrt{(1+|\nabla\tau|^2)|H|^2 + (\Delta \tau)^2 } \\
& -\Delta \tau \sinh^{-1}(\frac{\Delta \tau}{ |{H_{\tau_0}}| \sqrt{1+|\nabla\tau|^2}})+\Delta \tau \sinh^{-1}(\frac{\Delta \tau}{ |H| \sqrt{1+|\nabla\tau|^2}}) \\
& +\Delta \tau \sinh^{-1}(\frac{\Delta \tau_0}{ |{H_{\tau_0}}| \sqrt{1+|\nabla\tau_0|^2}})-\Delta \tau \sinh^{-1}(\frac{\Delta \tau_0}{ |H| \sqrt{1+|\nabla\tau_0|^2}}) \\
&-\frac{\nabla \tau_0 \cdot \nabla \tau}{\sqrt{1+|\nabla\tau_0|^2}}  \Big{[} \sqrt{ |H_{\tau_0}|^2+ \frac{(\Delta \tau_0)^2}{1+|\nabla \tau_0|^2} }- \sqrt{ |H|^2+ \frac{(\Delta \tau_0)^2}{1+|\nabla \tau_0|^2} } \Big{]}.
\end{split}\]
With a new  variable $x= \frac{\Delta \tau}{\sqrt{1+|\nabla \tau|^2}}$ and  $x_0= \frac{\Delta \tau_0}{\sqrt{1+|\nabla \tau_0|^2}}$,
the first six terms of the integrand is simply
\[
 \begin{split} 
 \sqrt{1+|\nabla \tau|^2} {\Big [}& \sqrt{ |H_{\tau_0}|^2+  x^2 }- \sqrt{ |H|^2+x^2}  \\
 & -x \big ( \sinh^{-1}{\frac{x}{ |H_{\tau_0}|}}- \sinh^{-1}\frac{x}{ |H|} -\sinh^{-1}\frac{x_0}{ |H_{\tau_0}|}+\sinh^{-1}\frac{x_0}{ |H|}\big){\Big ]}.
\end{split}
  \]
Let
\[  \begin{split} 
f(x) = &  \sqrt{ |H_{\tau_0}|^2+  x^2 }- \sqrt{ |H|^2+x^2}  \\
 & -x \left[ \sinh^{-1}{\frac{x}{ |H_{\tau_0}|}}- \sinh^{-1}\frac{x}{ |H|} -\sinh^{-1}\frac{x_0}{ |H_{\tau_0}|}+\sinh^{-1}\frac{x_0}{ |H|}\right].
\end{split} \]
Direct computation shows that
\[  f'(x) =\sinh^{-1}\frac{x}{ |H|} -\sinh^{-1}{\frac{x}{ |H_{\tau_0}|}} +\sinh^{-1}\frac{x_0}{ |H_{\tau_0}|}-\sinh^{-1}\frac{x_0}{ |H|}. \]
If $ |H_{\tau_0}| > |H|$, $ f(x_0) =   \sqrt{ |H_{\tau_0}|^2+  x_0^2 }- \sqrt{ |H|^2+x_0^2}$ is the global minimum for $f$(x) and equality holds if and only if $x=x_0$.
 Hence,
\[ 
\begin{split} 
A  &\ge \int_{\Sigma} ( \sqrt{1+|\nabla \tau|^2} - \frac{\nabla \tau_0 \cdot \nabla \tau}{\sqrt{1+|\nabla\tau_0|^2}}) \Big{ [} \sqrt{ |H_{\tau_0}|^2+ \frac{(\Delta \tau_0)^2}{1+|\nabla \tau_0|^2} }- \sqrt{ |H|^2+ \frac{(\Delta \tau_0)^2}{1+|\nabla \tau_0|^2} } \Big{]}    \\
 &\ge \int_{\Sigma} (  \frac{1}{\sqrt{1+|\nabla\tau_0|^2}})\Big{ [} \sqrt{ |H_{\tau_0}|^2+ \frac{(\Delta \tau_0)^2}{1+|\nabla \tau_0|^2} }- \sqrt{ |H|^2+ \frac{(\Delta \tau_0)^2}{1+|\nabla \tau_0|^2} } \Big{]}  
\end{split} \]
The last inequality follows simply from
\[  ( \sqrt{1+|\nabla \tau|^2} - \frac{\nabla \tau_0 \cdot \nabla \tau}{\sqrt{1+|\nabla\tau_0|^2}}) \ge     ( \sqrt{1+|\nabla \tau|^2} - \frac{ | \nabla \tau_0| |\nabla \tau|}{\sqrt{1+|\nabla\tau_0|^2}}) \ge   \frac{1}{\sqrt{1+|\nabla\tau_0|^2}}\] and equality holds if and only if $\nabla \tau=\nabla \tau_0$.
On the other hand,
\[  E(\Sigma,\tau_0) =  \int_{\Sigma} (  \frac{1}{\sqrt{1+|\nabla\tau_0|^2}})\Big{ [} \sqrt{ |H_{\tau_0}|^2+ \frac{(\Delta \tau_0)^2}{1+|\nabla \tau_0|^2} }- \sqrt{ |H|^2+ \frac{(\Delta \tau_0)^2}{1+|\nabla \tau_0|^2} } \Big{]}    \]
if one evaluates $div_{\sigma}  \alpha_{H}$ and $div_{\sigma}  \alpha_{H_{\tau_0}}$  using equations (\ref{div_1}) and (\ref{div_2}).
\end{proof}


\section{Local minimizing property of critical points of quasi-local energy}

In this section, we start with a metric $\sigma$ and consider an isometric embedding into $\R^{3,1}$ with time function $\tau_0$. The image is an embedded space-like 2-surface $\Sigma_{\tau_0}$ in $\R^{3,1}$.
The corresponding data on $\Sigma_{\tau_0}$ are denoted as $|H_{\tau_0}|$ and $ \alpha_{H_{\tau_0}}$.
We consider the quasi-local energy  of $\Sigma_{\tau_0}$ as a physical surface in the spacetime $\R^{3,1}$ with respect to another isometric embedding $X_\tau$ into $\R^{3,1}$ with time function $\tau$.  In this section, we prove that for $\tau$ close to $\tau_0$, $\tau$ is admissible with respect to the surface $\Sigma_{\tau_0}$. This shows that, 
for $\tau$ close to $\tau_0$, we have
\[  E(\Sigma_{\tau_0} ,\tau)  \ge 0 \]
\subsection{Admissible isometric embeddings}
We recall definitions and theorems related to admissible isometric embeddings from \cite{wy2}.

\begin{definition}\label{generalized_mean}
Suppose $i: \Sigma \hookrightarrow N$ is an embedded spacelike two-surface in a spacetime $N$. Given a smooth function $\tau$ on $\Sigma$ and a spacelike unit normal $e_3$,
the generalized mean curvature associated with these data is defined to be 
\[ h(\Sigma, i,\tau ,e_3) = - \sqrt{1+|\nabla \tau|^2} \langle H , e_3 \rangle - \alpha_{e_3}(\nabla \tau)  \]
where, as before, $H$ is the mean curvature vector of $\Sigma$ in $N$ and $\alpha_{e_3}$ is the connection form of the normal bundle determined by $e_3$. 
\end{definition}
Recall, in Definition 5.1 of \cite{wy2}, given a physical surface $\Sigma$ in spacetime $N$ with induced metric $\sigma$ and mean curvature vector $H$,  there are three conditions for a function $\tau$ to be admissible.
\begin{enumerate}
\item{ The metric, $\sigma +d \tau \otimes d \tau$, has positive Gaussian curvature.}

\item{ $\Sigma$ bounds a hypersurface $\Omega$ in $N$ where Jang's equation with Dirichlet boundary condition $\tau$ is solvable on $\Omega$ . Let the solution be $f$.}

\item{The generalized mean curvature $h(\Sigma, i,\tau ,e_3')$  is positive where $e_3'$ is determined by the solution $f$ of Jang's equation as follows:}
\[ e_3' = \cosh \theta e_3 + \sinh \theta e_4 \]
where $\sinh \theta = \frac{e_3(f)}{\sqrt{1+|\nabla \tau|^2}}$ and $e_3$ is the outward unit spacelike normal of $\Sigma$ in $\Omega$, $e_4$ is the future timelike unit normal of $\Omega$ in $N$.
\end{enumerate} 

We recall that from \cite{wy2} if $\tau$ corresponds to an admissible isometric embedding and $\Sigma$ is a 2-surface in spacetime $N$
that satisfies Assumption 1, then the quasi-local energy $E(\Sigma, \tau)$ is non-negative.

\subsection{Solving Jang's equation}
Let $(\Omega, g_{ij})$ be a Riemannian manifold with boundary $\partial \Omega=\Sigma$. Let $p_{ij}$ be
a symmetric 2-tensor on $\Omega$. The Jang's equation asks for a hypersurface $\widetilde \Omega$ in $\Omega \times \R$, defined as a graph of a function $f$ over $\Omega$, such that the mean curvature of 
$\widetilde \Omega$  is the same as the trace of the restriction of $p$ to $\widetilde \Omega$ . In a local coordinate $x^i$ on $\Omega$, Jang's equation takes the following form: 
\[   \sum_{i,j=1}^3(g^{ij }   -\frac{f^i f^j}{1+|Df|^2}) (\frac{D_iD_j f}{\sqrt{1+ |Df|^2}} -p_{ij}) =0  \]
where $D$ is the covariant derivative with respect to the metric $g_{ij}$. 
When $p_{ij}=0$, Jang's equation becomes the equation for minimal graph. Equation of of minimal surface type may have blow-up solutions. 
In \cite{sy}, it is shown by Schoen-Yau that solutions of Jang's equation can only blow-up at marginally trapped surface in $\Omega$. Namely, surfaces $S$ in $\Omega$ such that
\[  H_S  \pm tr_S p =0.  \] 

Following the analysis of Jang's equation in \cite{sy} and \cite{wy2},  we prove the following theorem for the existence of solution to the Dirichlet problem of Jang's equation. 

\begin{theorem}\label{thm_Jang}
Let $(\Omega, g_{ij})$ be a Riemannian manifold with boundary $\partial \Omega=\Sigma$, $p_{ij}$ be
a symmetric 2-tensor on $\Omega$, and $\tau$ be a function on $\Sigma$. Then Jang's equation with Dirichlet boundary data $\tau$ is solvable on $\Omega$ if
\[ H_\Sigma>| tr_\Sigma p|, \]
 and  there is no marginally trapped surface inside $\Omega$.
\end{theorem}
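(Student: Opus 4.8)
The plan is to solve the Dirichlet problem by combining the continuity method with the capillary regularization of Schoen--Yau \cite{sy}: first solve a perturbed equation for each small parameter $t>0$, then recover a solution of Jang's equation in the limit $t\to0$. For fixed $t>0$ I would study
\[
\sum_{i,j=1}^3\Bigl(g^{ij}-\frac{f^if^j}{1+|Df|^2}\Bigr)\Bigl(\frac{D_iD_jf}{\sqrt{1+|Df|^2}}-p_{ij}\Bigr)=t\,f
\]
with boundary data $f|_\Sigma=\tau$. The zeroth order term $t\,f$ restores a maximum principle: evaluating the equation at an interior extremum of $f$, where $Df=0$ and $D^2f$ is sign-definite, yields $\sup_\Omega|f|\le\max(\sup_\Sigma|\tau|,\tfrac{1}{t}\sup_\Omega|p|)$, a $C^0$ bound for each fixed $t$. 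For each fixed $t>0$ the regularized Dirichlet problem is then solvable by the standard existence theory for equations of prescribed-mean-curvature type (Leray--Schauder together with boundary and gradient estimates, as in \cite{sy}), producing $f_t\in C^{2,\alpha}(\overline{\Omega})$. The real content of the theorem is to make the estimates uniform as $t\to0$ and to extract a solution of the unperturbed equation in the limit.

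The two hypotheses enter precisely at the two a priori estimates that must be made uniform in $t$. For the boundary gradient estimate I would use barriers built from the distance function to $\Sigma$: the inequality $H_\Sigma>|tr_\Sigma p|$ is exactly the mean-convexity-type condition ensuring that $w=\psi(\mathrm{dist}(\cdot,\Sigma))$, for a suitable concave profile $\psi$, is an upper barrier matching $\tau$ on $\Sigma$ (and $-w$ a lower barrier), which bounds $|Df_t|$ on $\Sigma$ independently of $t$. For the interior gradient estimate I would invoke the blow-up analysis of Schoen--Yau: the graph of $f_t$ in $\Omega\times\R$ is a hypersurface whose geometry is controlled in terms of $g$, $p$, and $t$, and any sequence of points at which $|Df_{t_k}|\to\infty$ with $t_k\to0$ produces, after vertical rescaling and passing to a Hausdorff limit, a vertical cylinder $S\times\R$ whose base $S$ satisfies the marginally trapped condition $H_S\pm tr_S p=0$.

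I expect this interior gradient estimate to be the main obstacle, and it is here that the hypothesis of no marginally trapped surface in $\Omega$ is decisive: if no such $S$ exists, the Schoen--Yau cylinder cannot form, so $|Df_t|$ is bounded uniformly on compact subsets. The remaining delicate point is to rule out a blow-up cylinder forming at the boundary, which I would again exclude using $H_\Sigma>|tr_\Sigma p|$ together with the boundary barrier, giving a uniform gradient bound on all of $\overline{\Omega}$; integrating this against the fixed boundary values $f_t|_\Sigma=\tau$ then yields a $C^0$ bound uniform in $t$. Once the uniform $C^1$ estimate is in hand the equation for $f_t$ is uniformly elliptic with bounded coefficients, so Schauder theory gives uniform $C^{2,\alpha}$ bounds and a subsequence $f_{t_k}\to f$ in $C^2_{loc}(\overline{\Omega})$; since $t_k f_{t_k}\to0$, the limit $f$ solves Jang's equation on $\Omega$ with $f|_\Sigma=\tau$, as required. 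A secondary technical care is to track the $p_{ij}$ terms through the rescaling so that the blow-up limit is genuinely marginally trapped rather than merely minimal, matching the surfaces excluded by hypothesis.
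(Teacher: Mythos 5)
Your proposal is correct and takes essentially the same approach as the paper: the paper likewise reduces the theorem to a uniform boundary gradient estimate (its Theorem \ref{thm_solvable_Jang}), proved by constructing sub- and supersolutions of the form $\Psi(d)/\epsilon + \tau$ from the distance function $d$ to $\Sigma$ under the hypothesis $H_\Sigma > |tr_\Sigma p|$, while the capillary regularization, the interior blow-up analysis at marginally trapped surfaces, and the passage to the limit are exactly the ingredients it delegates to \cite{sy} and Section 4.3 of \cite{wy2}. The only cosmetic correction is that your upper barrier should be the extension of $\tau$ plus the concave profile of the distance function (so it actually matches the boundary data $\tau$ rather than vanishing on $\Sigma$), which is precisely the paper's test function.
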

Following the approach in \cite{sy} and  Section 4.3 of \cite{wy2}, it suffices to  control  the boundary gradient of the solution to Jang's equation. 
 We have the following theorem:
\begin{theorem} \label{thm_solvable_Jang}
The normal derivative of a solution of the Dirichlet problem of Jang's equation is bounded if
\[ H_\Sigma> |tr_\Sigma p|. \]
\end{theorem}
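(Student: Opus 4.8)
The plan is to establish this as a boundary gradient estimate via the barrier method, using that Jang's equation is quasilinear elliptic of prescribed--mean--curvature type and therefore obeys a comparison principle. Writing the operator as
\[ Q(f)=\sum_{i,j=1}^3\Bigl(g^{ij}-\frac{f^i f^j}{1+|Df|^2}\Bigr)\Bigl(\frac{D_iD_j f}{\sqrt{1+|Df|^2}}-p_{ij}\Bigr), \]
the leading coefficient matrix $a^{ij}(Df)=g^{ij}-\frac{f^i f^j}{1+|Df|^2}$ is positive definite, and $Q$ depends on $f$ only through $Df$ and $D^2f$. Hence $Q$ is elliptic, translation invariant in $f$, and admits a comparison principle. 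Since the solution satisfies $Q(f)=0$ and equals $\tau$ on $\Sigma$, its tangential derivatives along $\Sigma$ are already fixed by $\tau$, so the task reduces to bounding the normal derivative $\partial_\nu f$. For this I would trap $f$ between an upper barrier $w^+$ and a lower barrier $w^-$, each agreeing with $\tau$ on $\Sigma$ and satisfying $Q(w^+)\le 0\le Q(w^-)$ throughout a collar neighborhood of $\Sigma$.

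To build the barriers, let $d$ denote the distance to $\Sigma$, which is smooth in a collar $\{0\le d<d_0\}$ with $|Dd|=1$, and whose level sets $\Sigma_t=\{d=t\}$ foliate the collar. I would extend $\tau$ into the collar by making it constant along the normal geodesics (equivalently $\tau^{\#}=\tau\circ\pi$ for the nearest--point projection $\pi$ onto $\Sigma$), and set
\[ w^{\pm}=\tau^{\#}\pm\psi(d),\qquad \psi(0)=0,\ \psi'>0,\ \psi''<0, \]
with $\psi'(0)$ large and $\psi$ concave, to be fixed. The crux is the degeneration of $Q(w^{\pm})$ as $\psi'\to\infty$: since $Dw^{\pm}\approx\pm\psi'\,Dd$, the matrix $a^{ij}(Dw^{\pm})$ collapses onto the tangent planes of the level sets, so that $a^{ij}(Dw^{\pm})\,d_id_j\to 0$, while $a^{ij}(Dw^{\pm})D_iD_jd\to\Delta d=-H_{\Sigma_t}$ (to leading order) and $a^{ij}(Dw^{\pm})p_{ij}\to(g^{ij}-d^id^j)p_{ij}=tr_{\Sigma_t}p$. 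Consequently I expect the leading order of $Q(w^{\pm})$ to be $\mp\bigl(H_{\Sigma_t}\pm tr_{\Sigma_t}p\bigr)$, together with a favorably signed contribution from $\psi''<0$.

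Here the hypothesis enters decisively. The strict inequality $H_\Sigma>|tr_\Sigma p|$ gives both $H_\Sigma+tr_\Sigma p>0$ and $H_\Sigma-tr_\Sigma p>0$, hence by continuity $H_{\Sigma_t}\pm tr_{\Sigma_t}p>0$ throughout a sufficiently thin collar. This forces the leading term of $Q(w^+)$ to be negative and that of $Q(w^-)$ positive, with a gap bounded away from zero, so all the $O(1/\psi')$ error terms can be absorbed once $\psi'$ is large; thus $w^+$ is a supersolution and $w^-$ a subsolution on $\{0\le d\le d_0\}$. Choosing $\psi$ of logarithmic type, say $\psi(d)=\frac1k\log(1+Nd)$, I can simultaneously make $\psi'(0)$ large but explicit, preserve the super/subsolution inequalities, and (invoking the a priori $C^0$ bound on $f$) arrange $w^+\ge f\ge w^-$ on the inner face $\{d=d_0\}$. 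The comparison principle then yields $w^-\le f\le w^+$ in the collar, and since all three agree on $\Sigma$, this gives $|\partial_\nu f|\le\psi'(0)+|\partial_\nu\tau^{\#}|$, the desired bound.

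The main obstacle I anticipate is the careful bookkeeping in the degeneration of the operator: proving that $a^{ij}(Dw^{\pm})p_{ij}\to tr_{\Sigma_t}p$ and $a^{ij}(Dw^{\pm})D_iD_jd\to\Delta d$ \emph{uniformly} in the collar, with all error terms --- those arising from $D\tau^{\#}$, from the curvature of the foliation $\Sigma_t$, and from the finite (though large) size of $\psi'$ --- controlled and absorbed into the strictly positive quantities $H_{\Sigma_t}\pm tr_{\Sigma_t}p$, and then checking that the collar width $d_0$, the barrier height $\psi(d_0)$, and the slope $\psi'(0)$ can be chosen compatibly. This is precisely the step in which the strict inequality $H_\Sigma>|tr_\Sigma p|$, as opposed to mere equality, is indispensable.
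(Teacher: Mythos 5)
Your proposal is correct and takes essentially the same route as the paper's own proof: both arguments build sub- and supersolutions of the form (an extension of $\tau$) plus a steep concave function of the distance to $\Sigma$, exploit the collapse of the coefficient matrix $g^{ij}-\frac{f^if^j}{1+|Df|^2}$ onto the tangent planes of the level sets of $d$ as the slope blows up (so that the leading terms become $\pm H_\Sigma \mp tr_\Sigma p$), and use the strict inequality $H_\Sigma>|tr_\Sigma p|$ to give both barriers the correct sign. The only difference is cosmetic: the paper writes the barrier as $\Psi(d)/\epsilon+\tau$ and sends $\epsilon\to 0$, while you take $\tau^{\#}\pm\psi(d)$ with $\psi'(0)$ large.
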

\begin{remark}In \cite{aem}, Andersson, Eichmair and Metzger proved a similar 
result about boundedness of boundary gradient of solutions to Jang's equation in order to study existence of marginally trapped surfaces.
\end{remark}
\begin{proof}
We follow the approach used in  Theorem 4.2 of \cite{wy2} but with a different form for the sub and super
solutions. Let $g_{ij}$ and $p_{ij}$ be the induced metric and second fundamental form of the hypersurface  $\Omega$.  Let $\Sigma$ be the boundary of $\Omega$ with induced metric 
$\sigma$ and mean curvature $H_{\Sigma}$ in $\Omega$. We consider the following operator for Jang's equation.
\[  Q(f) = \sum_{i,j=1}^3 (g^{ij}  -\frac{f^i f^j}{1+|Df|^2}) (\frac{D_iD_j f}{\sqrt{1+|Df|^2} } -p_{ij}).   \] 
We extend the boundary data $\tau$ to the interior of the hypersurface $\Omega$. We still denote the extension by $\tau$. Consider the following test function
\[  f= \frac{\Psi(d)}{\epsilon} + \tau  \] 
where $d$ is the distance function to the boundary of $\Omega$. We compute
\[ 
\begin{split} 
D_i f= &\frac{1}{\epsilon} \Psi' d_i + D_i\tau \\
D_i D_j f  =& \frac{1}{\epsilon}( \Psi'' d_i d_j +\Psi'  D_i D_j d ) + D_iD_j \tau
\end{split}
 \]
Therefore, 
\[   
\begin{split}
Q(f) = &  (g^{ij}  -\frac{f^i f^j}{1+|Df|^2}) (\frac{D_iD_j f}{\sqrt{1+|Df|^2} } -p_{ij})  \\
         =  &  \frac{1}{\epsilon}  (g^{ij}  -\frac{f^i f^j}{1+|Df|^2})\frac{ \Psi'' d_i d_j }{\sqrt{1+|Df|^2}}+    \frac{1}{\epsilon}  (g^{ij}  -\frac{f^i f^j}{1+|Df|^2}) \frac { \Psi'  D_i D_j d}{\sqrt{1+|Df|^2}}  \\
             &   +  (g^{ij}  -\frac{f^i f^j}{1+|Df|^2})\frac{D_iD_j \tau }{\sqrt{1+|Df|^2}}+  (g^{ij}  -\frac{f^i f^j}{1+|Df|^2}) p_{ij}
\end{split}
\]
At the boundary of $\Omega$, it is convenient to use a frame $\{ e_a, e_3 \}$ where $e_a$ are tangent to the boundary and $e_3$ is normal to the boundary. 
\[
\begin{split}
 D_3 f = & \frac{1}{\epsilon} \Psi' + D_n\tau  \\
D_a f = & D_a \tau .
\end{split}
\]
As $\epsilon$ approaches  $0$, we have
\[
\begin{split}
    \epsilon \sqrt{1+|Df|^2}  = & |\Psi'| +  O(\epsilon)\\
   (g^{ij}  -\frac{f^i f^j}{1+|Df|^2})  =&  \sigma^{ab}+ O(\epsilon) .
\end{split}
\]
Moreover, the distance function $d$ to the boundary of $\Omega$ satisfies
\[  \sigma^{ab} d_{a} d_{b}= 0   \qquad {\rm and} \qquad  \sigma^{ab}D_aD_b d=H_{\Sigma}.\]

Hence, we conclude that 
\[  Q(f)  = \frac{\Psi'}{|\Psi'|   } H_{\Sigma}+ tr_{\Sigma} p + O(\epsilon) \] 
As a result,  sub and super solutions exist when  $H_\Sigma> |tr_\Sigma p|$. One can then use Perron method to find the solution between the sub and super solution. 
\end{proof}
\begin{remark}
Here we proved the result when the dimension of $\Omega$ is 3. The result holds in higher dimension as well. 
\end{remark}
\subsection{Proof of Theorem 2}

\begin{proof}
It suffices to show that, for $\tau$ close to $\tau_0$, $\tau$ is admissible with respect to $\Sigma_{\tau_0}$ in $\R^{3,1}$ if
$H_{{\tau_0}}$ is spacelike.

For such a $\tau$, the Gauss curvature of $\sigma + d \tau \otimes  d\tau $ is close to that of  $\sigma + d \tau_0 \otimes  d\tau_0 $. In particular, it remains positive.  This verifies the first condition for admissibility.

We apply  Theorem \ref{thm_Jang} in the case where $\Sigma$ bounds a spacelike hypersurface $\Omega$ in $\R^{3,1}$ and $g_{ij}$ and $p_{ij}$ be the induced metric and second fundamental form on $\Omega$, respectively. Moreover, the projection of $\Sigma$ onto the orthogonal complement of $T_0$ is convex since  the Gauss curvature of $\sigma + d \tau \otimes  d\tau $ is positive. 

That the mean curvature of $\Sigma$ is spacelike implies that $ |H_\Sigma| >| tr_\Sigma p|$. Moreover, that the projection $\widehat \Sigma$ is convex implies $H_\Sigma>0$. It follows that  $ H_\Sigma >| tr_\Sigma p|$.  We recall  
 there is no marginally trapped surface in $\R^{3,1}$ (see for example, \cite{k}).  As a result, Jang's equation with the Dirichlet boundary data $\tau$ is solvable as long as $\Sigma=\partial \Omega$ has spacelike mean curvature vector. This verifies the second condition for  admissibility.

To verify the last condition,  it suffices to check for $\tau_0$ since it is an open condition. Namely, it suffices to prove that 
\[ h(\Sigma, X_{\tau_0}, \tau_0 ,  e_3') >0. \] 
Lemma \ref{lemma_gauge} in the appendix implies that the generalized mean curvature $h(\Sigma, X_{\tau_0}, \tau_0 ,e'_3)$ is the same as the generalized mean curvature $h(\Sigma, X_{\tau_0}, \tau_0 , \breve e_3(\Sigma_{\tau_0}))$  where $\breve e_3(\Sigma_{\tau_0})$ is the vector field on $\Sigma_{\tau_0}$ obtained by parallel translation of the outward unit normal of  $\widehat \Sigma_{\tau_0}$ along $T_0$.  The last condition for $\tau_0$ now follows from Proposition 3.1 of \cite{wy2}, which states that for $\breve e_3 (\Sigma_{\tau_0})$,
\[ h(\Sigma, X_{\tau_0}, \tau_0 , \breve e_3 ( \Sigma_{\tau_0})) =  \frac{\widehat H}{\sqrt{1+ |\nabla \tau_0|^2}}>0. \] 
\end{proof}

\section{Global minimum in the axially symmetric case}
 In proving Theorem 3, we need three Lemmas concerning a spacelike 2-surface $\Sigma_\tau$ in $\R^{3,1}$ with time function $\tau$. 
First we introduce a new energy functional which depends on a physical gauge. 
\begin{definition} Let $\Sigma$ be closed embedded spacelike 2-surface in spacetime $N$ with induced metric $\sigma$. Let $e_3$ be a spacelike normal vector field along $\Sigma$ in $N$. For any $f$ such that the isometric  embedding of $\sigma$ into $\R^{3,1}$ with time function $f$ exists, we define
\begin{equation}   \label{new_energy}
\tilde E(\Sigma, e_3,f) =   \int_{\widehat \Sigma_{f}}  \widehat H_{f} dv_{\widehat \Sigma_{f}} - \int_{ \Sigma}  h(\Sigma, i, f, e_3)   dv_{\Sigma}  
\end{equation} where $h(\Sigma, i, f, e_3)$ is the generalized mean curvature (see Definition \ref{generalized_mean}).
\end{definition}

This functional is less nonlinear than $E(\Sigma, f)$. Provided  the mean curvature vector $H$ of $\Sigma$ is spacelike, the following relation holds
\[E(\Sigma, f)=\tilde{E}(\Sigma, {e}_3^{\text{can}}(f), f)\]
where ${e}_3^{\text{can}}(f)$ is chosen such that 
\[     \langle H, {e}_4^{\text{can}}(f) \rangle= \frac{- \Delta f}{|H| \sqrt{1+|\nabla f|^2}}    \]

In addition, the first variation of  $\tilde E(\Sigma, e_3, f)$ with respect to $f$ can be computed as in \cite{wy2}:
\[
 (\widehat{H}\hat{\sigma}^{ab} -\hat{\sigma}^{ac} \hat{\sigma}^{bd} \hat{h}_{cd})\frac{\nabla_b\nabla_a f}{\sqrt{1+|\nabla f|^2}}+ div_\sigma (  \frac{\langle H , e_3 \rangle \nabla f}{\sqrt{1+|\nabla f|^2}}) +  div_{\sigma} \alpha_{ e_3}.
\]
where $\hat{h}_{ab}$ and $\widehat{H}$ are the second fundamental form and mean curvature of $\widehat \Sigma_{f}$, respectively.

We recall that for $\Sigma_\tau$, assuming the projection onto the orthonormal complement of $T_0$ is an embedded surface, there is a unique outward normal spacelike unit vector field $\breve{e}_3 (\Sigma_\tau)$ which is orthogonal to $T_0$. Indeed, $\breve{e}_3 (\Sigma_\tau)$ can be obtained by parallel translating the unit outward normal vector of $\widehat \Sigma_{\tau}$, $\hat \nu$, along $T_0$.

\begin{lemma} \label{lemma_cri}
For a spacelike 2-surface $\Sigma_\tau$ in $\R^{3,1}$ with time function $\tau$,  $f=\tau$ is a critical point of the functional $\tilde E(\Sigma_{\tau}, \breve{e}_3(\Sigma_\tau),  f)$.
\end{lemma}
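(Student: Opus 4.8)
The plan is to read criticality of $f=\tau$ for $\tilde E(\Sigma_\tau,\breve e_3(\Sigma_\tau),f)$ as the vanishing of the first-variation expression displayed just before Lemma \ref{lemma_cri}, and then to reduce that vanishing to the optimal embedding equation \eqref{Euler}, which $\tau$ already solves for the surface $\Sigma_\tau$. Concretely, with the physical data taken to be those of $\Sigma_\tau$ and the fixed gauge $e_3=\breve e_3(\Sigma_\tau)$, what must be shown is the pointwise identity
\[
(\widehat{H}\hat{\sigma}^{ab}-\hat{\sigma}^{ac}\hat{\sigma}^{bd}\hat{h}_{cd})\frac{\nabla_b\nabla_a\tau}{\sqrt{1+|\nabla\tau|^2}}+div_\sigma\Big(\frac{\langle H_\tau,\breve e_3\rangle\,\nabla\tau}{\sqrt{1+|\nabla\tau|^2}}\Big)+div_\sigma\,\alpha_{\breve e_3}=0 .
\]

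First I would record that $f=\tau$ satisfies the optimal embedding equation \eqref{Euler} for the data of $\Sigma_\tau$. This is precisely the fact already used in deriving \eqref{div_2}: since $\R^{3,1}$ satisfies the dominant energy condition and, by the admissibility argument of Section 3, every $f$ close to $\tau$ is admissible for $\Sigma_\tau$, positivity of quasi-local energy gives $E(\Sigma_\tau,f)\ge 0$; as the reference surface coincides with the physical surface at $f=\tau$ we have $E(\Sigma_\tau,\tau)=0$, so $\tau$ is a local minimizer of $E(\Sigma_\tau,\cdot)$ and hence a solution of \eqref{Euler}.

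Next I would show that, when the gauge is taken to be the canonical one $e_3^{\mathrm{can}}(f)$ associated with $f$, the first-variation expression coincides, up to sign, with the left-hand side of \eqref{Euler}. This is a direct substitution: writing $e_3^{\mathrm{can}}(f)=\cosh\theta\,\bar e_3+\sinh\theta\,\bar e_4$ with $\bar e_3=-H/|H|$ and $\sinh\theta=\frac{-\Delta f}{|H|\sqrt{1+|\nabla f|^2}}$, one has $\langle H,e_3^{\mathrm{can}}\rangle=-\cosh\theta\,|H|$ and $\alpha_{e_3^{\mathrm{can}}}=\alpha_H+d\theta$, so that $div_\sigma\,\alpha_{e_3^{\mathrm{can}}}=div_\sigma\,\alpha_H+\Delta\theta$. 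Plugging these into the first-variation expression reproduces exactly the negative of the left-hand side of \eqref{Euler}. Consequently, the frozen-gauge first variation at $f=\tau$ in the gauge $e_3^{\mathrm{can}}(\tau)$ vanishes by the previous step.

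It then remains to identify $\breve e_3(\Sigma_\tau)$ with the canonical gauge $e_3^{\mathrm{can}}(\tau)$ of $\Sigma_\tau$ at its own time function; this is the step I expect to be the main obstacle, since it is the only place where the concrete geometry of the Minkowskian graph $\Sigma_\tau$ enters. Writing $\Sigma_\tau=(\tau,\widehat X)$, I would compute the normal frame explicitly: $\breve e_3=(0,\hat\nu)$ is the parallel translate along $T_0$ of the outward unit normal $\hat\nu$ of $\widehat\Sigma_\tau$, and the completing future timelike unit normal $\breve e_4$ is found to satisfy $\langle H_\tau,\breve e_4\rangle=\frac{-\Delta\tau}{|H_\tau|\sqrt{1+|\nabla\tau|^2}}$, which is precisely the relation defining $e_4^{\mathrm{can}}(\tau)$ and hence the hyperbolic angle $\theta$ of the canonical gauge. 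Thus $\breve e_3(\Sigma_\tau)=e_3^{\mathrm{can}}(\tau)$; this identification is the content of the frame computations in \cite{wy2} and is consistent with Proposition 3.1 there, which gives $h(\Sigma,X_\tau,\tau,\breve e_3)=\widehat H/\sqrt{1+|\nabla\tau|^2}$. Combining this identification with the two preceding steps, the frozen-gauge first variation at $f=\tau$ in the gauge $\breve e_3(\Sigma_\tau)$ vanishes, which is exactly the assertion that $f=\tau$ is a critical point of $\tilde E(\Sigma_\tau,\breve e_3(\Sigma_\tau),f)$.
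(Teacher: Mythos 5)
Your three steps are individually correct \emph{when their hypotheses hold}, and steps (1)--(2) are essentially the reasoning the paper itself uses to justify equation \eqref{div_2} in the proof of Theorem \ref{thm_int}: positivity of quasi-local energy plus admissibility of nearby time functions forces $\tau$ to solve the optimal embedding equation for the data of $\Sigma_\tau$, and your substitution $\langle H,e_3^{\text{can}}\rangle=-\cosh\theta|H|$, $\alpha_{e_3^{\text{can}}}=\alpha_H+d\theta$ correctly turns the frozen-gauge first variation into the negative of the left-hand side of \eqref{Euler}. The identification $\breve e_3(\Sigma_\tau)=e_3^{\text{can}}(\tau)$ is also correct; it follows from $\langle H_\tau,\breve e_4\rangle=-\Delta\tau/\sqrt{1+|\nabla\tau|^2}$, a consequence of $\Delta_\sigma X=H_\tau$ and $\tau=-X\cdot T_0$.

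The genuine gap is one of generality, and it is fatal for the way Lemma \ref{lemma_cri} is used later. Your argument needs the mean curvature vector $H_\tau$ to be spacelike and nonvanishing: without that, the canonical gauge $e_3^{\text{can}}(\tau)$ (a hyperbolic rotation of $-H_\tau/|H_\tau|$), the functional $E(\Sigma_\tau,\cdot)$, the angle $\theta$, and the entire admissibility/positivity theory of \cite{wy2} are undefined or unavailable; you additionally need the projection $\widehat\Sigma_\tau$ to be convex and nearby $f$ to be admissible (the machinery of Section 3). Lemma \ref{lemma_cri} assumes none of this --- only that $\Sigma_\tau$ is spacelike with embedded projection, which is all that is needed to define $\tilde E(\Sigma_\tau,\breve e_3(\Sigma_\tau),f)$, since the fixed gauge $\breve e_3$ makes no reference to the causal character of $H_\tau$. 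This extra generality is precisely what gets used: in Lemma \ref{thm_positive} (hence Theorem \ref{thm_global_axial}), the lemma is applied to every surface $\Sigma_{s_0\tau}$, $0\le s_0\le 1$, under the sole hypothesis $H_0^2\ge\langle H_{s\tau},H_{s\tau}\rangle$, which by Lemma \ref{lemma_mean_identity} permits $\langle H_{s\tau},H_{s\tau}\rangle\le 0$, i.e.\ null or timelike mean curvature, where your proof cannot start. (The paper's proof of Lemma \ref{thm_positive} only ever needs $\langle H_{s\tau},\breve e_3\rangle^2=\langle H_{s\tau},H_{s\tau}\rangle+\frac{(s\Delta\tau)^2}{1+|s\nabla\tau|^2}\ge 0$, which is automatic as it is a square.) The paper's own proof avoids all of this: it works entirely in the fixed gauge $\breve e_3(\Sigma_\tau)$, rewrites the first-variation integrand as $\nabla_b$ of an explicit vector field, and shows that vector field vanishes pointwise using the identity $\alpha_a=\hat h_{ac}\tau^c/\sqrt{1+|\nabla\tau|^2}$ --- no causal assumption on $H_\tau$, no admissibility, no appeal to positivity. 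Invoking positivity also runs against the stated purpose of Section 4 (``Instead of using admissibility, we prove the necessary positivity\dots''): Lemma \ref{lemma_cri} is the tool by which the paper circumvents admissibility for time functions far from the critical point, so a proof of it that reinstates admissibility would undercut, and in the required generality does not deliver, the global result.
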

\begin{proof}
The first variation of  $\tilde E(\Sigma_{\tau}, \breve{e}_3(\Sigma_\tau), f)$ at $f = \tau$ is 
\[
 (\widehat{H}\hat{\sigma}^{ab} -\hat{\sigma}^{ac} \hat{\sigma}^{bd} \hat{h}_{cd})\frac{\nabla_b\nabla_a \tau}{\sqrt{1+|\nabla\tau|^2}}+ div_\sigma (  \frac{h \nabla \tau}{\sqrt{1+|\nabla\tau|^2}}) +  div_{\sigma} \alpha.
\]
where $h=\langle H_\tau, \breve{e}_3(\Sigma_\tau)\rangle $ and $\alpha=\alpha_{\breve{e}_3(\Sigma_\tau)}$ are data on $\Sigma_\tau$ with respect to the gauge $\breve{e}_3(\Sigma_\tau)$ and 
$ \hat{h}_{ab}$  and $\widehat{H} $  are the second fundamental form and mean curvature of $\widehat \Sigma_{\tau}$. Denote the covariant derivative on $\widehat \Sigma$  with respect to the induced metric $\hat \sigma$ by $\hat \nabla$.
We wish to show that the first variation is $0$.

Recall from \cite{wy2}, we have
\[  
 \widehat H  = - h -\frac{ \alpha ( \nabla \tau) }{1+|\nabla \tau|^2}.
\]

Moreover, we have the following relation between metric and covariant derivative of $\Sigma_{\tau}$ and $\widehat \Sigma_{\tau}$:
\[
\begin{split}
\hat \sigma ^{ab}  = \sigma^{ab} - \frac{\tau^a \tau^b}{ 1+|\nabla \tau|^2}, \,\,
\hat \sigma ^{ab}  \hat \nabla_a \tau  = \frac{\tau^b}{ 1+|\nabla \tau|^2},
\text{ and } \hat \nabla_a  \hat \nabla_b \tau & = \frac{\nabla_a \nabla_b \tau }{ 1+|\nabla \tau|^2}.
\end{split}
\]

In addition, for a tangent vector field $W^a$,
$ \hat \nabla_a W^a = \nabla_a W^a + \frac{ (\nabla_b  \nabla_c \tau) \tau^c W^b}{ 1+|\nabla \tau|^2}.$
As a result, 
\[
\begin{split}
   & (\widehat{H}\hat{\sigma}^{ab} -\hat{\sigma}^{ac} \hat{\sigma}^{bd} \hat{h}_{cd})\frac{\nabla_b\nabla_a \tau}{\sqrt{1+|\nabla\tau|^2}} \\
={}  & \hat \nabla_b  [ (\widehat{H}\hat{\sigma}^{ab} -\hat{\sigma}^{ac} \hat{\sigma}^{bd} \hat{h}_{cd})    \sqrt{1+|\nabla\tau|^2} \hat \nabla_a \tau ] -  (\widehat{H}\hat{\sigma}^{ab} -\hat{\sigma}^{ac} \hat{\sigma}^{bd} \hat{h}_{cd})  (\hat \nabla_a \tau )(\hat \nabla_b \sqrt{1+ |\nabla \tau|^2})\\
={} &  \nabla_b  [ (\widehat{H}\hat{\sigma}^{ab} -\hat{\sigma}^{ac} \hat{\sigma}^{bd} \hat{h}_{cd})   \sqrt{1+|\nabla\tau|^2} \hat \nabla_a   \tau] + \frac{ \tau^e ( \nabla_b \nabla _e \tau)}{1+|\nabla \tau|^2} (\widehat{H}\hat{\sigma}^{ab} -\hat{\sigma}^{ac} \hat{\sigma}^{bd} \hat{h}_{cd})  \sqrt{1+|\nabla\tau|^2} \hat \nabla_a\tau \\
  {} &    -   (\widehat{H}\hat{\sigma}^{ab} -\hat{\sigma}^{ac} \hat{\sigma}^{bd} \hat{h}_{cd})  \hat \nabla_a \tau \frac{  \tau^e \nabla_b \nabla _e \tau}{\sqrt{1+|\nabla \tau|^2}} \\
={} &  \nabla_b  [ (\widehat{H}\hat{\sigma}^{ab} -\hat{\sigma}^{ac} \hat{\sigma}^{bd} \hat{h}_{cd})   \sqrt{1+|\nabla\tau|^2} \hat \nabla_a  \tau]. 
\end{split}
 \]
Hence, 
\[
\begin{split}
 &   (\widehat{H}\hat{\sigma}^{ab} -\hat{\sigma}^{ac} \hat{\sigma}^{bd} \hat{h}_{cd})\frac{\nabla_b\nabla_a \tau}{\sqrt{1+|\nabla\tau|^2}}+ div_\Sigma (  \frac{h \nabla\tau}{\sqrt{1+|\nabla\tau|^2}}) +  div_{\sigma} \alpha \\
={} & \nabla_b   [   (\widehat{H}\hat{\sigma}^{ab} -\hat{\sigma}^{ac} \hat{\sigma}^{bd} \hat{h}_{cd})   \sqrt{1+|\nabla\tau|^2} \hat \nabla_a  \tau +     \frac{h \tau^b}{\sqrt{1+|\nabla\tau|^2}}    + \alpha^b]  \\
={} &  \nabla_b   [  -\hat{\sigma}^{ac} \hat{\sigma}^{bd} \hat{h}_{cd}   \sqrt{1+|\nabla\tau|^2} \hat \nabla_a  \tau  - \frac{\tau^b}{1+|\nabla \tau|^2}  \alpha (\nabla \tau)  + \alpha^b] \\
={} &  \nabla_b   [  \frac{-  \hat{\sigma}^{bd} \hat{h}_{cd}    \tau^c}{  \sqrt{1+|\nabla\tau|^2}}  - \frac{\tau^b}{1+|\nabla \tau|^2} \alpha (\nabla \tau)  + \alpha^b]. \\
\end{split}  \]
On the other hand, by definition,  
\[  \alpha_a
= \frac{1}{\sqrt{1+|\nabla \tau|^2}}\langle  \nabla_{\frac{\partial}{ \partial x^a}} \hat \nu,  T_0 + \nabla \tau \rangle
=  \frac{\hat h_{ac} \tau^c}{\sqrt{1+|\nabla \tau|^2}}.
  \]
Hence,
\[  - \frac{\tau^b}{1+|\nabla \tau|^2}  \alpha (\nabla \tau)  + \alpha^b = \frac{\sigma^{ab}\hat h_{ac} \tau^c}{\sqrt{1+|\nabla \tau|^2}} -\frac{\tau^b}{1+|\nabla \tau|^2}\frac{\hat h_{ac} \tau^a \tau^c}{\sqrt{1+|\nabla \tau|^2}} = \frac{ \hat \sigma^{ab}\hat h_{ac} \tau^c}{\sqrt{1+|\nabla \tau|^2}}. \]
This shows that 
\[  \frac{-  \hat{\sigma}^{bd} \hat{h}_{cd}    \tau^c}{  \sqrt{1+|\nabla\tau|^2}}  - \frac{\tau^b}{1+|\nabla \tau|^2} \alpha (\nabla \tau)  + \alpha^b =0 \]
and completes the proof of the lemma.
\end{proof}

\begin{lemma} \label{thm_positive}

Let $X_{s\tau}, 0\leq s \leq 1$ be a family of isometric embeddings of $\sigma$ into $\R^{3,1}$ with time function $s\tau$. Suppose $\Sigma_0$, 
the image of $X_0$, lies in a totally geodesic Euclidean 3-space, $E^3$, and  $\Sigma_{s\tau}$, the image of $X_{s\tau}$, projects to an embedded surface in $E^3$ for $0\leq s \leq 1$.
Assume further that $\Sigma_0$ is mean convex  and  $H_0^2 \geq \langle H_{s\tau},   H_{s\tau}  \rangle$  for $0\leq s\leq 1$. 
Regarding $\Sigma_0$ as a physical surface in the spacetime
$\R^{3,1}$, then
\[  E(\Sigma_0,\tau)\ge 0. \]
\end{lemma}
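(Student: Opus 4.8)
Because $\Sigma_0$ lies in the totally geodesic Euclidean $3$-space $E^3$, its mean curvature vector in $\R^{3,1}$ is tangent to $E^3$, so the connection one-form vanishes, $\alpha_{H_0}=0$, and mean convexity gives $|H_0|=H_0>0$. In particular, taking $\tau=0$ (so $\theta=0$ and $\widehat\Sigma_0=\Sigma_0$), the defining formula collapses to $E(\Sigma_0,0)=\int_{\Sigma_0}H_0\,dv-\int_\Sigma H_0\,dv=0$. The plan is therefore to interpolate along the given family: set $g(s)=E(\Sigma_0,s\tau)$ for $s\in[0,1]$, so that $g(0)=0$ and $E(\Sigma_0,\tau)=g(1)=\int_0^1 g'(s)\,ds$. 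I will aim to prove the pointwise-in-$s$ statement $g'(s)\ge 0$ for every $s$; this is precisely why the hypothesis $H_0^2\ge \langle H_{s\tau},H_{s\tau}\rangle$ is imposed along the entire family and not only at $s=1$.

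\textbf{First variation and cancellation over the common projection.} Next I would compute $g'(s)$ as the first variation of $E(\Sigma_0,\cdot)$ at $f=s\tau$ in the direction $\tfrac{d}{ds}(s\tau)=\tau$, using the Euler--Lagrange operator associated to \eqref{Euler}. The decisive point is that, by uniqueness of the isometric embedding with prescribed time function, the reference embedding of $(\Sigma,\sigma)$ with time function $s\tau$ is exactly $X_{s\tau}$; hence the reference projection $\widehat\Sigma_{s\tau}$, together with its second fundamental form $\hat h_{ab}$ and mean curvature $\widehat H$ appearing in the operator, is \emph{common} to the physical surface $\Sigma_0$ and to the physical surface $\Sigma_{s\tau}$. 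By Lemma \ref{lemma_cri}, applied to $\Sigma_{s\tau}$ with time function $s\tau$, the full Euler--Lagrange operator of $E(\Sigma_{s\tau},\cdot)$ vanishes at $f=s\tau$. Subtracting this vanishing expression from the operator for $\Sigma_0$ cancels the shared $\hat h$-term, and after rewriting $div_\sigma\,\alpha_{H_{s\tau}}$ by the critical-point identity \eqref{div_2} (exactly as in the proof of Theorem \ref{thm_int}), $g'(s)$ is reduced to an integral whose integrand depends only on $s\tau$, on $H_0$, and on $|H_{s\tau}|$, with all explicit connection-form terms eliminated.

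\textbf{The pointwise mean curvature inequality.} Writing $x_s=\tfrac{s\,\Delta\tau}{\sqrt{1+s^2|\nabla\tau|^2}}$ and using $H_0\sinh\theta_s=-x_s$ (and the analogous identity for $|H_{s\tau}|$), the scalar combination produced in the previous step should take the form $\sqrt{H_0^2+x_s^2}-\sqrt{|H_{s\tau}|^2+x_s^2}$, weighted by a positive factor. This is the infinitesimal analogue of the last step in the proof of Theorem \ref{thm_int}: there the convexity defect of $f(x)$ measured $x\ne x_0$, but here both time functions equal $s\tau$, so that defect vanishes and only the clean mean-curvature difference survives (the Cauchy--Schwarz step becoming an equality with weight $\tfrac{1}{\sqrt{1+s^2|\nabla\tau|^2}}>0$). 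The hypothesis $H_0^2\ge\langle H_{s\tau},H_{s\tau}\rangle=|H_{s\tau}|^2$, valid since $H_{s\tau}$ is spacelike, then makes the difference nonnegative pointwise, so $g'(s)\ge 0$ and hence $E(\Sigma_0,\tau)=\int_0^1 g'(s)\,ds\ge 0$.

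\textbf{Main obstacle.} I expect the genuine work to lie in the second step: correctly matching the two Euler--Lagrange operators over the common projection $\widehat\Sigma_{s\tau}$ and eliminating $\alpha_{H_{s\tau}}$ through the critical-point identity, so that $g'(s)$ is expressed purely through the scalar data $H_0$, $|H_{s\tau}|$ and $s\tau$. The delicate bookkeeping is the transformation of the connection-form contribution $\int_\Sigma \alpha_{H_{s\tau}}(\nabla\tau)\,dv$ and of the $\theta_s-\tilde\theta_s$ terms into the above difference; this mirrors the algebra already carried out in Section 2, with the family parameter $s$ playing the role of the two separate time functions there. Once this reduction is in hand, the positivity is the purely pointwise inequality driven by $H_0\ge|H_{s\tau}|$, and the conclusion of Lemma \ref{thm_positive} follows by integrating in $s$.
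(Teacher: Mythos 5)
Your skeleton (interpolate along $s\tau$, use criticality of the Minkowski reference data at each $s$, finish with $H_0^2\ge\langle H_{s\tau},H_{s\tau}\rangle$) matches the paper's strategy, but the central reduction in your second and third steps is wrong, and the statement you aim for --- $g'(s)\ge 0$ via the ``clean'' integrand --- is not what the computation produces. When you subtract the vanishing Euler--Lagrange expression of $E(\Sigma_{s\tau},\cdot)$ at $s\tau$ from that of $E(\Sigma_0,\cdot)$, the $\hat h$-terms cancel, but the connection form does \emph{not}: since $\alpha_{H_0}=0$, the difference retains $+\alpha_{H_{s\tau}}$ inside the divergence, so after integrating against $\tau$ by parts, $g'(s)$ contains $\int_\Sigma \alpha_{H_{s\tau}}(\nabla\tau)\,dv_\Sigma$ together with $\sinh^{-1}$-difference terms, and the $\cosh$-difference enters with weight $\frac{s|\nabla\tau|^2}{\sqrt{1+s^2|\nabla\tau|^2}}$, not $\frac{1}{\sqrt{1+s^2|\nabla\tau|^2}}$. (You cannot ``also'' invoke \eqref{div_2} to remove $div_\sigma\,\alpha_{H_{s\tau}}$: substituting the critical-point identity \emph{is} the subtraction you already performed; using it again merely reinstates the $\hat h$-term.) These surviving terms are exactly what assembles into $F(s)/s$: the correct identity, as in the paper, is
\begin{equation*}
F'(s)=\frac{F(s)}{s}+\frac{1}{s}\int_{\Sigma_0}\frac{\sqrt{H_0^2+x_s^2}-\sqrt{\langle H_{s\tau},H_{s\tau}\rangle+x_s^2}}{\sqrt{1+s^2|\nabla\tau|^2}}\,dv_{\Sigma_0}=:\frac{F(s)}{s}+\frac{I(s)}{s},\qquad x_s=\frac{s\Delta\tau}{\sqrt{1+s^2|\nabla\tau|^2}},
\end{equation*}
so the output is the differential inequality $F'(s)\ge F(s)/s$, not $F'(s)\ge0$, and one still needs the ODE-comparison step ($F(0)=F'(0)=0$ implies $F(s)/s$ is nondecreasing with limit $0$, hence $F\ge0$), which your proposal omits. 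Indeed your claimed formula $F'(s)=I(s)$ is provably inconsistent with the correct one: combining the two gives $F(s)=(s-1)I(s)\le 0$ and $F(1)=0$, contradicting $E(\Sigma_0,\tau)>0$ for nonconstant $\tau$; equivalently, your formula forces $F''(0)=0$, which fails generically.

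There is a second, structural gap: you cite Lemma \ref{lemma_cri} as saying that $s\tau$ is a critical point of the \emph{full} energy $E(\Sigma_{s\tau},\cdot)$, but that lemma concerns the gauge-fixed functional $\tilde E(\Sigma_{s\tau},\breve e_3(\Sigma_{s\tau}),\cdot)$ and is proved by direct computation. The statement you actually invoke (the analogue of \eqref{div_2}) is justified in Section 2 by positivity of quasi-local energy, which requires admissibility of nearby time functions for $\Sigma_{s\tau}$ and, before anything else, that $H_{s\tau}$ be spacelike so that $|H_{s\tau}|$ and $E(\Sigma_{s\tau},\cdot)$ are even defined. The hypotheses of Lemma \ref{thm_positive} give only the upper bound $H_0^2\ge\langle H_{s\tau},H_{s\tau}\rangle$; the quantity $\langle H_{s\tau},H_{s\tau}\rangle$ may be nonpositive along the family. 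This is precisely why Section 4 introduces $\tilde E$ and proves Lemma \ref{lemma_cri} computationally: the paper differentiates the reference term $G(s)=\int_{\widehat\Sigma_{s\tau}}\widehat H_{s\tau}\,dv_{\widehat\Sigma_{s\tau}}$ through $\tilde E$ (where only $\langle H_{s\tau},\breve e_3\rangle$ and $\alpha_{\breve e_3}$ appear, which are defined regardless of the causal character of $H_{s\tau}$) and handles the $H_0$-term by direct differentiation, never needing admissibility for the intermediate surfaces. To repair your argument you would either have to add hypotheses guaranteeing spacelike $H_{s\tau}$ and admissibility, or reroute the variation through $\tilde E$ as the paper does --- and in either case you would still land on the inequality $F'\ge F/s$ rather than pointwise monotonicity.
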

\begin{proof}
Instead of proving admissibility of $\tau$, we will use variation of the quasi-local energy and the point-wise inequality of the mean curvatures.

 Hence, we consider \[ F(s)=E(\Sigma_0,s\tau) \] for $0 \le s \le 1$. $F(0)=0$ and we shall prove that $F(1)$ is non-negative by deriving a differential inequality for $F(s)$. 
 
 Let 
\[ G(s) = \int_{\widehat \Sigma_{s\tau}}  \widehat H_{s\tau} dv_{\widehat \Sigma_{s\tau}}.\] For any $0\leq s_0\leq 1$,  $G(s)$ is related to $\tilde{E}(\Sigma_{s_0\tau}, \breve{e}_3(\Sigma_{s_0\tau}), s\tau)$ by
\begin{equation} \label{eqn_G}
G(s) =\tilde{E}(\Sigma_{s_0\tau}, \breve{e}_3(\Sigma_{s_0\tau}), s\tau)+ \int_{ \Sigma_{s_0\tau}}   \left(  -\langle H_{s_0\tau} ,  \breve{e}_3(\Sigma_{s_0\tau})\rangle  \sqrt{1+|\nabla s \tau|^2}  - \alpha_{\breve{e}_3(\Sigma_{s_0\tau})}   ( \nabla s\tau) \right)   dv_{\Sigma_{s_0\tau}}.
\end{equation}

As a consequence of Lemma \ref{lemma_cri}, we have
\[   \frac{\partial}{ \partial s} \tilde E(\Sigma_{s_0\tau},  \breve{e}_3(\Sigma_{s_0\tau}), s\tau) \Big{|} _{s=s_0}=0.\]
By equation \eqref{eqn_G}, 
\[ 
 \begin{split}
G'(s_0) 
= &   \int_{\Sigma_{s_0\tau}}  \left(\frac{-\langle H_{s_0 \tau} ,  \breve{e_3}(\Sigma_{s_0\tau})\rangle  s_0 |\nabla\tau|^2}{\sqrt{1+s_0^2|\nabla\tau|^2} } - \alpha_{\breve{e_3}(\Sigma_{s_0\tau})}   ( \nabla \tau) \right) \,\,dv_{\Sigma_{s_0\tau}} \\
=& \frac{G(s_0)}{s_0} -\frac{1}{s_0}  \int_{\Sigma_{s_0\tau}} \frac{1}{\sqrt{1+s_0^2|\nabla\tau|^2}} \sqrt{  \langle H_{s_0 \tau} , H_{s_0 \tau} \rangle+ \frac{(s_0\Delta \tau)^2}{1+|s_0\nabla \tau|^2}   }\,\, dv_{\Sigma_{s_0\tau}}.
 \end{split}\]
Recall that by the definition of quasi-local energy:
\[ F(s) = G(s) -  \int_{\Sigma_0}\left( \sqrt{(1+|s\nabla\tau|^2) H_0^2 + (s\Delta \tau)^2 } -s\Delta \tau \sinh^{-1}(\frac{s\Delta \tau}{ H_0 \sqrt{1+|s\nabla\tau|^2}})\right) dv_{\Sigma_0}. \] 

We write the integrand of the last integral as 
\[ \sqrt{1+s^2|\nabla\tau|^2}  \left(\sqrt{H_0^2+ \frac{(s\Delta \tau)^2}{1+|s\nabla \tau|^2}   }
       -  \frac{(s\Delta \tau)}{\sqrt{1+|s\nabla \tau|^2}}\sinh^{-1}(   \frac{s\Delta \tau}{H_0\sqrt{1+|s\nabla \tau|^2}}) \right).\] Differentiate this expression with respect to the variable $s$, we obtain 
       
\[  \begin{split}
&     \frac{s |\nabla\tau|^2}{\sqrt{1+s^2|\nabla\tau|^2} } \left (\sqrt{H_{0}^2+ \frac{(s\Delta \tau)^2}{1+|s\nabla \tau|^2}   }
 -  \frac{(s\Delta \tau)}{\sqrt{1+|s\nabla \tau|^2}}\sinh^{-1}(   \frac{s\Delta \tau}{H_{0}\sqrt{1+|s\nabla \tau|^2}}) \right) \\
   & - \sqrt{1+s^2|\nabla\tau|^2} \left[ \frac{s \Delta \tau}{ (1+ | s \nabla \tau|^2)^{3/2}}  \sinh^{-1} (   \frac{s\Delta \tau}{H_{0}\sqrt{1+|s\nabla \tau|^2}}  ) \right] \\
=&  \frac{1}{s}   \left[  \sqrt{(1+|s\nabla\tau|^2){H_0}^2 + (s\Delta \tau)^2 } -s\Delta \tau \sinh^{-1}(\frac{s\Delta \tau}{ {H_0} \sqrt{1+|s\nabla\tau|^2}})  \right]   \\
  &-\frac{1}{s}   \frac{1}{\sqrt{1+s^2|\nabla\tau|^2}} \sqrt{H_{0}^2+ \frac{(s\Delta \tau)^2}{1+|s\nabla \tau|^2} }.
 \end{split}\]

Since the induced metrics on $\Sigma_{s_0\tau}$ and $\Sigma_0$ are the same, we can evaluate all integrals on the surface $\Sigma_0$. This leads to
\[  F'(s) = \frac{F(s)}{s} + \frac{1}{s}      \int_{\Sigma_0} \left(\frac{ \sqrt{H_0^2+ \frac{(s\Delta \tau)^2}{1+|s\nabla \tau|^2}   } - \sqrt{ \langle H_{s_ \tau} , H_{s_ \tau} \rangle+ \frac{(s\Delta \tau)^2}{1+|s\nabla \tau|^2}   } }{\sqrt{1+s^2|\nabla\tau|^2}}\right)dv_{\Sigma_0} .  \]

The assumption $H_0^2  \geq \langle H_{s \tau} , H_{s \tau} \rangle $ implies the last term is non-negative and thus
\[ F'(s) \ge \frac{F(s)}{s}. \]
As $F(0) = F'(0)=0$, the positivity of $F(s)$ follows from a simple comparison result for ordinary differential equation. 
\end{proof}

The last lemma specializes to axially symmetric metrics.  

 \begin{lemma} \label{lemma_mean_identity}
Suppose the isometric embedding $X_0$ of an axially symmetric metric $\sigma=P^2 d \theta^2 + Q^2 \sin^2 \theta d \phi^2$ into $\R^3$ is given by the coordinates $(u \sin \phi, u \cos \phi , v)$  where $P$, $Q$, $u$, and $v$ are functions of $\theta$. Let $\tau=\tau(\theta)$ be an axially symmetric function and $X_\tau$ be the isometric embedding of $\sigma$ in $\R^{3,1}$ with time function $\tau$.
The following identity holds for the  mean curvature vector $H_{ \tau}$ of $\Sigma_{ \tau}$ in $\R^{3,1}$.
\[
    \langle H_\tau , H_\tau \rangle   =H_0^2  -   \frac{  ( v_\theta  \Delta \tau - \tau_\theta \Delta v )^2}{  v_\theta^2 + \tau_{\theta}^2}
\] where $\Delta$ is the Laplace operator of $\sigma$.
\end{lemma}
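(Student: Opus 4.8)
The plan is to write down the two isometric embeddings explicitly, reduce the computation of both mean curvature vectors to the Beltrami formula $\vec H=\Delta_\sigma X$, and then isolate the entire discrepancy in a single radial algebraic identity.

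First I would record the two embeddings. Since $X_0=(u\sin\phi,u\cos\phi,v)$ is isometric, matching $\sigma$ forces $u=Q\sin\theta$ and $P^2=u_\theta^2+v_\theta^2$. For $X_\tau$ with time function $\tau(\theta)$, I write $X_\tau=(\hat u\sin\phi,\hat u\cos\phi,\hat v,\tau)$, where the first three entries form the projection $\widehat\Sigma_\tau$ onto the orthogonal complement of $T_0$. This projection isometrically embeds $\hat\sigma=\sigma+d\tau\otimes d\tau=(P^2+\tau_\theta^2)d\theta^2+Q^2\sin^2\theta\,d\phi^2$ into $\R^3$; since the $d\phi^2$ coefficient is unchanged I get $\hat u=u=Q\sin\theta$, and comparing the $d\theta^2$ coefficients gives $\hat v_\theta=\sqrt{v_\theta^2+\tau_\theta^2}$. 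The equality $\hat u=u$ is the crucial structural fact.

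Next I would invoke the Beltrami formula: for an isometric immersion into a flat (Euclidean or Lorentzian) space, $\Delta_\sigma X=\vec H$ componentwise, and the result is automatically normal, so no projection is needed. Writing $L(g):=\tfrac{1}{Pu}\partial_\theta(\tfrac{u}{P}g_\theta)$ for the radial part of $\Delta_\sigma$ acting on functions of $\theta$, a direct computation gives $\Delta_\sigma(u\sin\phi)=\sin\phi\,[L(u)-u^{-1}]$, $\Delta_\sigma(u\cos\phi)=\cos\phi\,[L(u)-u^{-1}]$, $\Delta_\sigma v=L(v)=\Delta v$, $\Delta_\sigma\tau=L(\tau)=\Delta\tau$, and likewise for $\hat v$. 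Because the first two entries of $X_0$ and $X_\tau$ coincide, their contributions are identical, and the Minkowski inner product (the time entry entering with a minus sign) yields
\[
\langle H_\tau,H_\tau\rangle-H_0^2=L(\hat v)^2-L(\tau)^2-L(v)^2.
\]

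The algebraic heart is then to evaluate this difference. Setting $a=\tfrac{u}{P}v_\theta$ and $b=\tfrac{u}{P}\tau_\theta$, so that $\tfrac{u}{P}\hat v_\theta=\sqrt{a^2+b^2}$ and $Pu\,L(g)=\partial_\theta(\tfrac{u}{P}g_\theta)$, I get $(Pu)^2L(\hat v)^2=(a a_\theta+b b_\theta)^2/(a^2+b^2)$, while $(Pu)^2L(v)^2=a_\theta^2$ and $(Pu)^2L(\tau)^2=b_\theta^2$. The Lagrange-type identity
\[
(a a_\theta+b b_\theta)^2-(a^2+b^2)(a_\theta^2+b_\theta^2)=-(a b_\theta-b a_\theta)^2
\]
collapses the three terms, and substituting $a b_\theta-b a_\theta=u^2(v_\theta\Delta\tau-\tau_\theta\Delta v)$ together with $a^2+b^2=\tfrac{u^2}{P^2}(v_\theta^2+\tau_\theta^2)$ produces exactly $-(v_\theta\Delta\tau-\tau_\theta\Delta v)^2/(v_\theta^2+\tau_\theta^2)$ after cancelling the common factor $(Pu)^2$. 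I expect the only genuine subtleties, rather than the routine differentiations, to be bookkeeping: correctly carrying the Lorentzian signature so that precisely the $\tau$-component enters $\langle H_\tau,H_\tau\rangle$ with a minus sign, and confirming that the Beltrami formula applies verbatim in the flat Lorentzian ambient so that no normal projection is needed. The geometric content reduces entirely to the observation $\hat u=u$, which makes the angular parts of the two mean curvature vectors agree and confines the whole difference to the $(v,\hat v,\tau)$ block.
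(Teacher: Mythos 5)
Your proposal is correct and follows essentially the same route as the paper: both write the embeddings explicitly using $\hat u = u$ and $\hat v_\theta = \sqrt{v_\theta^2+\tau_\theta^2}$, compute the mean curvature vectors componentwise via $\vec H = \Delta_\sigma X$ (so the angular components cancel in the difference), and collapse the remaining $(v,\hat v,\tau)$ block with a Lagrange-type identity. Your substitution $a=\tfrac{u}{P}v_\theta$, $b=\tfrac{u}{P}\tau_\theta$ is just a repackaging of the paper's intermediate identity $\Delta\tilde v = (v_\theta\Delta v + \tau_\theta\Delta\tau)/\sqrt{v_\theta^2+\tau_\theta^2}$ followed by completing the square.
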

\begin{proof}
The isometric embedding for an axially symmetric metric is reduced to solving ordinary differential equations. The isometric embedding  of $\sigma$ into $\R^3$ is given by $( u \sin \phi  , u \cos \phi ,v)$
where
\[ u^2 =Q^2 \sin^2 \theta  \qquad {\rm and}  \qquad v_\theta^2 + u_\theta ^2 = P^2.\]
The isometric embedding of the metric $\sigma + d\tau \otimes d\tau$ into $\R^3$ is given by 
 $( u \sin \phi  , u \cos \phi , \tilde v)$
where
\[  \tilde v_\theta^2 + u_\theta ^2 = P^2+ \tau_{\theta}^2. \]
Thus,
\[    \tilde v_\theta = \sqrt{  v_\theta^2 + \tau_{\theta}^2 } .  \]
Differentiating one more time with respect to $\theta$,
\[    \tilde v_{\theta\theta} = \frac{ v_\theta  v_{\theta\theta}    + \tau_\theta \tau_{\theta\theta}    }{\sqrt{  v_\theta^2 + \tau_{\theta}^2 } }, \]
and therefore
\[   \Delta   \tilde v  = \frac{1}{\sqrt{  v_\theta^2 + \tau_{\theta}^2 }}( v_\theta \Delta v + \tau_\theta \Delta \tau  ). \]
For the mean curvature, we have
\[ 
\begin{split}  
H_0^2 = &( \Delta(u \sin \phi))^2 + ( \Delta(u \cos \phi))^2 + ( \Delta v)^2, \text{ and }\\ 
\langle H_\tau , H_\tau \rangle =& ( \Delta(u \sin \phi))^2 + ( \Delta(u \cos \phi))^2 + ( \Delta \tilde v)^2 - ( \Delta \tau)^2. 
\end{split}
\]
Taking the difference and completing square, we obtain
\[ \begin{split} 
      H_0^2  -  \langle H_\tau , H_\tau \rangle 
=  & ( \Delta v)^2 +  ( \Delta \tau)^2 -   ( \Delta \tilde v)^2  \\
=  & \frac{1}{  v_\theta^2 + \tau_{\theta}^2} [ v_\theta  \Delta \tau - \tau_\theta \Delta v ]^2.
\end{split}\]
\end{proof}

Let's recall the statement of Theorem \ref{thm_global_axial}.

\vspace{5mm}\noindent{\bf Theorem 3}
{\it Let  $\Sigma$ satisfy Assumption 1. Suppose that the induced metric $\sigma$ of  $\Sigma$  is axially symmetric with positive Gauss curvature, $\tau=0$ is a solution to the optimal embedding equation for $\Sigma$ in $N$, and
\[ H_0 > |H|>0. \]
Then for any axially symmetric time function $\tau$ such that  $\sigma + d\tau \otimes  d\tau$ has positive Gauss curvature, 
\[ E(\Sigma , \tau) \ge E(\Sigma, 0). \]
Moreover, equality holds if and only if $\tau$ is a constant . } 

\begin{proof} By Theorem \ref{thm_int}, it suffices to show that $E(\Sigma_0, \tau)\geq 0$.
First, we show that for any $0 \le s \le 1$, the isometric embedding with time function $s \tau$ exists. Recall the Gaussian curvature for the metric $ \sigma + d \tau \otimes d \tau$ is
\[  \frac{1}{1+| \nabla \tau|^2} \left[ K+  ( 1+| \nabla \tau|^2 )^{-1}det(\nabla^2 \tau) \right] \]
where $K$ is the Gaussian curvature for the metric $ \sigma$. Since $K$ and $K+  ( 1+| \nabla \tau|^2 )^{-1}det(\nabla^2 \tau)$ are both positive, we conclude that $ \sigma + d (s \tau) \otimes d (s\tau)$   has positive Gaussian curvature for all   $0 \le s \le 1$. By Lemma \ref{lemma_mean_identity}, we have $H^2_0\geq  \langle H_{s\tau},  H_{s\tau} \rangle$. The theorem now follows from Lemma \ref{thm_positive}.
\end{proof}

\appendix 
\section{Proof of Lemma \ref{lemma_gauge} }
{\rm Here we present the proof of Lemma \ref{lemma_gauge} used in the proof of Theorem 2. Since the lemma is a general statement for any time function $\tau_0$, we use $\tau$ instead of $\tau_0$. 
\begin{lemma} \label{lemma_gauge}
For a surface $\Sigma_{\tau}$ in $\R^{3,1}$ which bounds a spacelike hypersurface $\Omega$, let $f$ be the solution of Jang's equation on $\Omega$ with boundary value $\tau$. 
Then 
\[ h(\Sigma, X_{\tau}, \tau , e'_3)   =h(\Sigma, X_{\tau}, \tau , \breve e_3(\Sigma_{\tau}))  \]
\end{lemma}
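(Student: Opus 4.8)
The plan is to treat this as a gauge-comparison statement. Both $e_3'$ and $\breve e_3(\Sigma_\tau)$ are outward spacelike unit normals of $\Sigma_\tau$ in $\R^{3,1}$ lying in the same component of the normal hyperbola, so they differ by a boost: there is a function $\phi$ on $\Sigma$ with $\breve e_3(\Sigma_\tau) = \cosh\phi\, e_3' + \sinh\phi\, e_4'$ and $\breve e_4 = \sinh\phi\, e_3' + \cosh\phi\, e_4'$, where $e_4'$ is the future timelike normal paired with $e_3'$ and $\breve e_4$ is the one paired with $\breve e_3$. The whole lemma then reduces to showing that $h$ is unchanged under this particular boost.

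First I would record the transformation law of the ingredients of $h$. A direct computation using $\langle \nabla e_3, e_3\rangle = \langle \nabla e_4, e_4\rangle = 0$ shows that the connection one-form transforms by $\alpha_{\breve e_3} = \alpha_{e_3'} - d\phi$, while the mean curvature component transforms by $\langle H, \breve e_3\rangle = \cosh\phi\langle H, e_3'\rangle + \sinh\phi\langle H, e_4'\rangle$. Substituting these into Definition \ref{generalized_mean} gives
\[ h(\Sigma, X_\tau, \tau, \breve e_3) - h(\Sigma, X_\tau, \tau, e_3') = -\sqrt{1+|\nabla\tau|^2}\big[(\cosh\phi - 1)\langle H, e_3'\rangle + \sinh\phi\langle H, e_4'\rangle\big] + \nabla\phi\cdot\nabla\tau, \]
so the lemma is equivalent to the vanishing of the right-hand side.

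The second and main step is to pin down the boost angle $\phi$ and verify this cancellation. The two gauges are fixed by genuinely different data: $\breve e_3$ is characterized by $\langle \breve e_3, T_0\rangle = 0$, equivalently $\breve e_4 = T_0^\perp/\sqrt{1+|\nabla\tau|^2}$, where $T_0^\perp$ is the normal part of the constant vector $T_0$ (recall $\nabla\tau = -T_0^\top$ since $\tau = -X\cdot T_0$, whence $|T_0^\perp| = \sqrt{1+|\nabla\tau|^2}$); whereas $e_3' = \cosh\theta\, e_3 + \sinh\theta\, e_4$ is fixed by the Jang data through $\sinh\theta = e_3(f)/\sqrt{1+|\nabla\tau|^2}$, with $e_3, e_4$ adapted to the spacelike hypersurface $\Omega$. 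I would express both frames in terms of the $\Omega$-adapted frame, read off $\phi$ as the difference of the corresponding boost parameters, and then evaluate $\langle H, e_3'\rangle$, $\langle H, e_4'\rangle$ and $\nabla\phi$ by differentiating the embedding and invoking Jang's equation to eliminate the $e_3(f)$-terms. Flatness of $\R^{3,1}$ is essential here: it lets us compute $H$, the connection form, and $\nabla\phi$ from the position vector $X$ and the constant $T_0$ alone, so that the three contributions on the right-hand side collapse.

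The hard part is precisely this last verification. The right-hand side is a delicate combination of the timelike mean-curvature component $\langle H, e_4'\rangle$ and the gradient $\nabla\phi$, and showing they cancel requires using Jang's equation as an identity (not merely its solvability) to relate $e_3(f)$ to the tilt $\theta$, and then to $\nabla\tau$ and the geometry of the projection $\widehat\Sigma_\tau$. As a consistency check, and an alternative route, one can evaluate $h$ in the $\breve e_3$ gauge directly: Proposition 3.1 of \cite{wy2} gives $h(\Sigma, X_\tau, \tau, \breve e_3) = \widehat H/\sqrt{1+|\nabla\tau|^2}$, so it would suffice to show the Jang gauge produces the same value. This formulation also makes transparent the, a priori surprising, independence of $h(\Sigma, X_\tau, \tau, e_3')$ from the choice of bounding hypersurface $\Omega$.
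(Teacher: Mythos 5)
Your reduction is set up correctly: the transformation laws $\alpha_{\breve e_3} = \alpha_{e_3'} - d\phi$ and $\langle H, \breve e_3\rangle = \cosh\phi\,\langle H, e_3'\rangle + \sinh\phi\,\langle H, e_4'\rangle$ are right, and the lemma is indeed equivalent to the vanishing of the expression you write down. But your proposal stops exactly where the content of the lemma begins: you never determine the boost angle $\phi$, and you explicitly defer this as ``the hard part.'' That is a genuine gap --- as written, the proposal is a correct reformulation with no proof attached. Moreover, your expectation of what the missing step involves is off. There is no delicate cancellation among $\langle H, e_3'\rangle$, $\langle H, e_4'\rangle$ and $\nabla\phi$ to be engineered, and Jang's equation is never used as a pointwise identity to trade curvature terms against gradient terms: the paper proves the stronger statement that $e_3' = \breve e_3(\Sigma_\tau)$ identically, i.e.\ $\phi \equiv 0$, so every term in your difference expression vanishes separately and no information about $H$ or the second fundamental form enters at all.

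The actual argument is linear algebra of frames. Since $\Omega \subset \R^{3,1}$ is spacelike, it is the graph over its projection $\widehat\Omega$ of a height function, and that height function, viewed as a function on $\Omega$, is precisely the solution $f$ of the Dirichlet problem for Jang's equation with boundary value $\tau$; this identification (going back to \cite{wy2}) is the only nontrivial input. Writing the $\Omega$-adapted frame $\{e_3, e_4\}$ in terms of the frame $\{\hat e_a, \hat e_3, T_0\}$ obtained by parallel-translating an adapted frame of $\widehat\Sigma_\tau$ along $T_0$, one computes
\[
\langle e_3, e_4'\rangle \;=\; \frac{-e_3(f)}{\sqrt{1+|\nabla\tau|^2}} \;=\; \frac{-\hat e_3(f)}{\sqrt{1-|\hat D f|^2}} \;=\; \langle e_3, \breve e_4\rangle,
\]
where the middle equality uses $(1-|\hat\nabla\tau|^2)(1+|\nabla\tau|^2)=1$. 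Since the normal plane of $\Sigma_\tau$ is two-dimensional and Lorentzian, a future-directed unit timelike normal is uniquely determined by its inner product with the fixed unit spacelike normal $e_3$; hence $e_4' = \breve e_4$, so $e_3' = \breve e_3$, and the lemma follows with no further computation. Had you carried out your own stated plan of expressing both frames in the $\Omega$-adapted frame, you would have landed on exactly this, and your steps (1)--(3) would become unnecessary. Finally, note that your closing appeal to Proposition 3.1 of \cite{wy2} is not an alternative proof: that proposition evaluates $h$ in the $\breve e_3$ gauge (which is how the lemma is applied in Section 3 of the paper), but it says nothing about the value of $h$ in the Jang gauge, which is the whole issue.
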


\begin{proof}
It suffices to show that $e_3'= \breve e_3(\Sigma_{\tau})$. For simplicity, denote $ \breve e_3(\Sigma_{\tau})$ by $ \breve e_3$ in this proof.

Let $\widehat \Omega$ and $\widehat \Sigma$ denote the projection of $\Omega$ and its boundary, $\Sigma_{\tau}$, to the complement of $T_0$. 
Let $\hat \nabla$ be the covriant derivative on $\widehat \Sigma$ and $\hat D$ be the covariant derivative on $\widehat \Omega$. Let $\nabla$ be the covriant derivative on $\Sigma$.

Write $ \Omega$  as the graph over $\widehat \Omega$ of the function $f$. $f$ can be viewed as a function on $\Omega$ as well. $f$ is precisely the solution to Jang's equation on $\Omega$ with Dirichlet boundary data $\tau$. 

We choose an orthonormal frame $\{ \hat e_a \}$ for $T \widehat \Sigma $. Let $\hat e_3$ be the outward normal of $\widehat \Sigma$ in $\widehat \Omega$. $\{  \hat e_a, \hat e_3, T_0 \}$ forms an orthonormal frame of the tangent space of $\R^{3,1}$. The frame is extend along $T_0$ direction by parallel translation to a frame of the tangent space of $\R^{3,1}$ on $\Sigma$. 

Let $\{ e_3, e_4 \}$ denote the frame of the normal bundle of $\Sigma$ such that $e_3$ is the unit outward normal of $\Sigma$ in $\Omega$ and $e_4$ is the furture directed unit normal of $\Omega$ in $\R^{3,1}$. In terms of the frame $\{  \hat e_a, \hat e_3, T_0 \}$, 
\begin{equation} \label{equ_frame_1}
\begin{split}
 e_3= & \frac{1}{\sqrt{1-|\hat D f|^2}} \left [    \sqrt{1- |\hat \nabla \tau|^2} \hat e_3 + \frac{\hat e_3(f)}{\sqrt{1- |\hat \nabla \tau|^2}}(T_0 + \hat  \nabla \tau) \right ] \\
 e_4 = & \frac{1}{ \sqrt{1- |\hat D f|^2}}(T_0 + \hat D f) 
\end{split}
\end{equation}
Let $\{ e_3', e_4' \}$ denote the frame determined by Jang's equation.  By Definition 5.1 of \cite{wy2}, it is chosen such that  
\[  \langle e_3 , e'_4 \rangle = \frac{-e_3(f)}{\sqrt{1+ |\nabla \tau|^2}} \]
Using equation (\ref{equ_frame_1}), 
\[  \langle e_3 , e'_4 \rangle = \frac{-e_3(f)}{\sqrt{1+ |\nabla \tau|^2}}=  \frac{-\hat e_3(f)}{\sqrt{1+ |\nabla \tau|^2}\sqrt{1-|\hat D f|^2}\sqrt{1-|\hat \nabla \tau|^2}}  \]
As a result, 
\[  \langle e_3 , e'_4 \rangle =  \frac{-\hat e_3(f)}{\sqrt{1-|\hat D f|^2}}  \]
since $ ( 1- |\hat \nabla \tau|^2 )(1+ |\nabla \tau|^2)=1.$

On the other hand, the frame $\{ \breve e_3, \breve e_4  \}$ is the frame of normal bundle such that $ \breve e_3 = \hat e_3$. In terms of the frame $\{  \hat e_a, \hat e_3, T_0 \}$,
\[  \breve e_4= \frac{T_0 + \hat \nabla \tau}{\sqrt{1-|\hat \nabla \tau|^2}}  \]
Using equation (\ref{equ_frame_1}),  
\[  \langle  e_3 ,  \breve e_4 \rangle =  \frac{1}{\sqrt{1-|\hat \nabla \tau|^2}}\left [\frac{\hat e_3(f) (-1+|\hat \nabla \tau|^2) }{\sqrt{1-|\hat D f|^2}\sqrt{1-|\hat \nabla \tau|^2} }\right ] = \frac{-\hat e_3(f)}{\sqrt{1-|\hat D f|^2}} \]
As a result, 
\[   \langle e_3 ,  \breve e_4 \rangle =   \langle e_3 , e'_4 \rangle \] 
Hence, 
$\{ e_3', e_4' \}$ and  $\{ \breve e_3, \breve e_4  \}$  are the same frame for the normal bundle of $\Sigma$
\end{proof}

{
\end{document}